\documentclass[10pt,a4paper]{elsarticle}

\journal{Disc. Applied Math.}

\usepackage{hyperref}
\usepackage{amsmath,amsthm,enumerate}
\usepackage{amssymb}
\usepackage{graphicx}
\usepackage{tikz}
\usetikzlibrary{calc,decorations.pathmorphing,decorations.text,fixedpointarithmetic}
\usepackage{subfigure}
\usepackage{refcount}
\usepackage[hmargin=2.5cm,vmargin=3cm]{geometry}
\usepackage{algpseudocode,algorithm,algorithmicx}
\algrenewcommand\algorithmicrequire{\textbf{Input:}}
\algrenewcommand{\algorithmiccomment}[1]{\hfill{\color{gray}\# #1}}
\newcommand*\Let[2]{\State #1 $\gets$ #2}

\newtheorem{theorem}{Theorem}[section]
\newtheorem{proposition}[theorem]{Proposition}
\newtheorem{lemma}[theorem]{Lemma}
\newtheorem{observation}[theorem]{Observation}
\newtheorem{conjecture}[theorem]{Conjecture}

\newtheorem{problem}[theorem]{Problem}

\theoremstyle{definition}

\newcommand{\SBSPG}[1]{\mathcal{SBSPG}_{#1}}
\newcommand{\vphi}{\varphi}

\usepackage[color=green!30]{todonotes}

\begin{document}

\begin{frontmatter}
  \title{Homomorphism bounds of signed bipartite $K_4$-minor-free graphs and edge-colorings of $2k$-regular $K_4$-minor-free multigraphs\tnoteref{t1}}

\tnotetext[t1]{This work is supported by the IFCAM project Applications of graph homomorphisms (MA/IFCAM/18/39) and by the ANR project HOSIGRA (ANR-17-CE40-0022). The third author has also received financial support from UMI3457 of CNRS (France) at CRM (Montreal, Canada) towards the completion of this project.}
 
 \author[LIMOS]{Laurent Beaudou}
 \author[LIMOS]{Florent Foucaud}
 \author[IRIF]{Reza Naserasr}

 \address[LIMOS]{LIMOS - CNRS UMR 6158, Universit\'e Clermont Auvergne, Clermont-Ferrand (France).\\ E-mails: laurent.beaudou@uca.fr, florent.foucaud@gmail.com}
 \address[IRIF]{CNRS - IRIF UMR 8243, Universit\'e Paris Diderot - Paris 7 (France). E-mail: reza@irif.fr}

 \begin{abstract}
A signed graph $(G, \Sigma)$ is a graph $G$ and a subset $\Sigma$ of its edges which corresponds to an assignment of signs to the edges: edges in $\Sigma$ are negative while edges not in $\Sigma$ are positive. A closed walk of a signed graph is balanced if the product of the signs of its edges (repetitions included) is positive, and unbalanced otherwise. The unbalanced-girth of a signed graph is the length of a shortest unbalanced closed walk (if such a walk exists). A homomorphism of $(G,\Sigma)$ to $(H,\Pi)$ is a homomorphism of $G$ to $H$ which preserves the balance of closed walks.

  In this work, given a signed bipartite graph $(B, \Pi)$ of unbalanced-girth $2k$, we give a necessary and sufficient condition for $(B, \Pi)$ to admit a homomorphism from any signed bipartite graph of unbalanced-girth at least $2k$ whose underlying graph is $K_4$-minor-free. The condition can be checked in polynomial time with respect to the order of $B$.

  Let $SPC(2k)$ be the signed bipartite graph on vertex set $\mathbb{Z}_2^{2k-1}$ where vertices $u$ and $v$ are adjacent with a positive edge if their difference is in $\{e_1,e_2, \ldots, e_{2k-1}\}$ (where the $e_i$'s form the standard basis), and adjacent with a negative edge if their difference is $J$ (that is, the all-1 vector). As an application of our work, we prove that every signed bipartite $K_4$-minor-free graph of unbalanced-girth $2k$ admits a homomorphism to $SPC(2k)$. This supports a conjecture of Guenin claiming that every signed bipartite planar graph of unbalanced-girth $2k$ admits a homomorphism to $SPC(2k)$ (this would be an extension of the four-color theorem).

  We also give an application of our work to edge-coloring $2k$-regular $K_4$-minor-free multigraphs.
  \end{abstract}

  \begin{keyword}
    Signed graph\sep homomorphism\sep minor\sep edge-coloring.
  \end{keyword}

\end{frontmatter}

\section{Introduction}

In a recent work~\cite{BFN17}, we gave a necessary and sufficient condition for a graph of odd-girth $2k+1$ to admit a homomorphism from any $K_4$-minor-free graph of odd-girth at least $2k+1$. We also presented applications of this result to edge-coloring of $(2k+1)$-regular $K_4$-minor-free multigraphs. Recently, Naserasr, Rollov\'a and Sopena~\cite{NRS15} introduced the notion of homomorphisms of signed graphs, as an extension of classic graph homomorphisms. They showed that the classic graph homomorphism questions are captured by the restriction of homomorphisms to the class of signed \emph{bipartite} graphs, thus providing motivation to study homomorphisms on this subclass.

In this paper, we provide a theorem that, for a given signed bipartite graph $(B,\Pi)$ of unbalanced-girth $2k$, helps to decide whether every signed bipartite $K_4$-minor-free graph admits a homomorphism to $(B,\Pi)$. Indeed, our theorem provides a necessary and sufficient condition on $(B,\Pi)$ for when this is the case. Proving that signed projective cubes of odd dimension satisfy the latter condition, we show that they are examples of such homomorphism bounds. As an application, we consider edge-coloring $2k$-regular $K_4$-minor-free multigraphs. Our work is motivated by some conjectures extending the four-color theorem (these are discussed in~\cite{NRS13}) and it provides partial support to these conjectures.

We start by giving our notation in Section~\ref{sec:notation}. Then, we describe the main property used in our characterization of bounds in Section~\ref{sec:mainprop}. We then state and prove our main characterization theorem in Section~\ref{sec:mainthm}. We present some algorithms based on this theorem in Section~\ref{sec:algo}, and we give further appplications of it in Section~\ref{sec:application}. Finally, we discuss our results and conclude the paper in Section~\ref{sec:conclu}.

\section{Notation}\label{sec:notation}

Dealing with signed graphs requires some technical notations. We gather them in this section. The term {\em graph} refers to simple and finite graphs (no loop or multi-edge). We do not consider loops in this work, but multi-edges are important. When needed, a graph with possible multi-edges will be referred to as a {\em multigraph}.

Given two graphs $G$ and $H$, a mapping $\vphi: V(G) \rightarrow V(H)$ is a {\em homomorphism from $G$ to $H$} if, for any edge $uv$ in $G$, $\vphi(u)\vphi(v)$ is an edge in $H$. Homomorphisms generalize the notion of proper coloring since a $k$-coloring is nothing but a homomorphism to $K_k$.

A graph $H$ is a {\em minor} of a graph $G$ if it can be obtained from $G$ by a sequence of these operations: deleting edges or vertices, and contracting edges. If a graph $H$ is not a minor of a graph $G$, we say that $G$ is {\em $H$-minor-free}. In this work, we mainly focus on $K_4$-minor-free graphs. These graphs are also known as \emph{partial $2$-trees}.

\subsection{Partial $k$-trees}

For any fixed integer $k$, the class of {\em $k$-trees} is defined recursively as follows:
\begin{itemize}
  \item the complete graph $K_k$ is a $k$-tree,
  \item given a $k$-tree $H$ and a $k$-clique $K$ of $H$, the graph obtained from $H$ by adding a vertex adjacent exactly to the vertices of $K$ is a $k$-tree.
\end{itemize}
A {\em partial $k$-tree} is any subgraph of a $k$-tree. Trees are $1$-trees (and reciprocally) and the partial $1$-trees are exactly the forests. The classes of partial $k$-trees are closed under taking minors (the partial $1$-trees are exactly the $K_3$-minor-free graphs and the partial $2$-trees are exactly the $K_4$-minor-free graphs, as shown by Wald and Colbourn~\cite[Theorem 3.1]{WC83}). In general, partial $k$-trees form a subclass of $K_{k+2}$-minor-free graphs. In particular, partial $3$-trees are $K_5$-minor-free, however not every partial $3$-tree is planar.

Partial $k$-trees have been studied under various equivalent viewpoints and are most notably known as the class of graphs of tree-width at most $k$. Partial $2$-trees are also known as {\em series-parallel graphs} (see~\cite{D65}). Note that, being $K_4$-minor-free, partial $2$-trees are planar.

The interest in viewing $K_4$-minor-free graphs as partial $2$-trees lies in the following natural definition: given a $2$-tree $G$, a \emph{$2$-tree ordering} of its vertices is an ordering $v_1, v_2 \ldots, v_n$ where $v_1v_2$ is an edge and each $v_i$ (for $i\geq 3$) has exactly two neighbors among $\{v_1,\ldots,v_{i-1}\}$; moreover, these two neighbors are themselves adjacent. Observe that in general, more than one such ordering can be associated to a $2$-tree. In this work, a vertex ordering of a $2$-tree will always refer to a $2$-tree ordering. Then, an ordering of a partial $2$-tree will be an ordering of one of its completions to a $2$-tree (such an ordering may contain vertices that are not in the partial $2$-tree). Observe again that there can be more than one such completion.

\subsection{Signed graphs}

We now provide the necessary terminology about signed graphs. A {\em signed graph} is a graph $G$ whose edges are assigned signs: $+$ or $-$. We consider both these signs as the elements of the $2$-element (multiplicative) group. By letting $\Sigma$ be the set of negative edges, we write $(G,\Sigma)$ to denote this signed graph. The set $\Sigma$ will be referred to as the {\em signature of $(G,\Sigma)$}.

A \emph{resigning} of a signed graph $(G,\Sigma)$ is a switch of the signs of all edges along a cut of $G$. This can also be seen as a sequence of resignings where at each step, the signs of all edges incident to a single vertex are switched. Two signatures on a graph $G$ are said to be \emph{equivalent} if one is obtained from the other by a resigning. It is easy to verify that this is an equivalence relation on the set of all signatures of $G$.

Given a signed graph $(G, \Sigma)$, a closed walk $C$ of $G$ is said to be \emph{balanced} (respectively, \emph{unbalanced}) if the product of all edges of $C$ (repetitions included) is $+$ (respectively $-$). Observe that the balance of a closed walk is invariant under resigning. The signature $\Sigma$ partitions the set of cycles of $G$ into two classes of cycles: balanced and unbalanced. The following theorem of Zaslavsky claims that this partition determines the equivalence class of signatures to which $\Sigma$ belongs.

\begin{theorem}
  \label{thm:Zaslavsky}\cite{Z82}
  Two signatures $\Sigma_1$ and $\Sigma_2$ on a graph $G$ are equivalent if and only if they induce the same set of unbalanced cycles.
\end{theorem}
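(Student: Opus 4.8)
The plan is to prove the two directions separately, noting that the forward implication is essentially already recorded in the excerpt. If $\Sigma_2$ is obtained from $\Sigma_1$ by a resigning, then since the balance of every closed walk---and hence of every cycle---is invariant under resigning (as observed just before the statement), the two signatures induce exactly the same set of unbalanced cycles. So the real content is the converse: if $\Sigma_1$ and $\Sigma_2$ induce the same set of unbalanced cycles, then they are equivalent.

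For the converse, I would first reduce to the case that $G$ is connected, since both resigning and the set of unbalanced cycles decompose over the connected components of $G$, so one may argue component by component and take the union of the resulting switching sets. Then I fix a spanning tree $T$ of $G$. The key normalization step is the following: for any signature $\Sigma$, there is an equivalent signature in which every edge of $T$ is positive. To see this, I encode a resigning by a switching function $s : V(G) \to \{+,-\}$, under which the edge $uv$ changes sign by the factor $s_u s_v$; rooting $T$ and propagating the constraint $s_u s_v = \sigma(uv)$ outward from the root along the tree edges determines $s$ uniquely and makes all tree edges positive. Applying this to each signature, I obtain $\Sigma_1' \sim \Sigma_1$ and $\Sigma_2' \sim \Sigma_2$, each with all tree edges positive; by the forward direction, $\Sigma_1'$ and $\Sigma_2'$ still induce the same set of unbalanced cycles as $\Sigma_1$ and $\Sigma_2$, hence the same set as each other.

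It then remains to show $\Sigma_1' = \Sigma_2'$. For each non-tree edge $e$, let $C_e$ be the unique cycle contained in $T + e$. Since all tree edges are positive in both $\Sigma_1'$ and $\Sigma_2'$, the cycle $C_e$ is unbalanced precisely when $e$ is negative. As $\Sigma_1'$ and $\Sigma_2'$ agree on which cycles---in particular, which fundamental cycles $C_e$---are unbalanced, they agree on the sign of every non-tree edge, and they agree on the tree edges, which are all positive in both. Hence $\Sigma_1' = \Sigma_2'$, and chaining the equivalences yields $\Sigma_1 \sim \Sigma_1' = \Sigma_2' \sim \Sigma_2$, as desired.

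I expect the normalization lemma---realizing ``all tree edges positive'' by a suitable resigning---to be the crux; once the fundamental-cycle basis is in place, the rest is bookkeeping. An alternative, more algebraic route would identify a resigning with an element of the cut (cocycle) space of $G$ over $\mathbb{F}_2$ and observe that two signatures agree on all cycles if and only if their symmetric difference is orthogonal to the cycle space, which by the standard orthogonality of the cycle and cut spaces forces the difference to lie in the cut space. I would nonetheless prefer the spanning-tree argument above, as it is self-contained and avoids invoking that orthogonality.
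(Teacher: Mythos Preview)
The paper does not actually prove this theorem; it merely states it with a citation to Zaslavsky~\cite{Z82} and uses it as a black box. So there is no ``paper's own proof'' to compare against.

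That said, your argument is correct and is essentially the standard proof of Zaslavsky's result. The spanning-tree normalization (making all tree edges positive by propagating a switching function outward from a root) is the right key step, and the conclusion via fundamental cycles is clean. Your alternative $\mathbb{F}_2$ cycle-space/cut-space argument is also valid and is in fact how the result is sometimes phrased in the literature; both routes are acceptable. One very minor point: when you say the switching function is ``determined uniquely,'' it is unique only up to a global sign (you may choose $s$ at the root arbitrarily), but this does not affect the argument since a global sign flip acts trivially on edge signs.
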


Loops and \emph{unbalanced digons} (unbalanced cycles of length~$2$) play an important role in the study of coloring and homomorphisms of signed graphs (see the article by Brewster et~al.~\cite{BFHN17}). As previously mentioned, we mostly consider simple graphs and will specifically use the term multigraph when multiple edges are allowed. This restriction allows a smoother definition of signed graph homomorphisms.

\paragraph{Homomorphisms of signed graphs}
Given two signed graphs $(G,\Sigma)$ and $(H, \Pi)$, a mapping $\vphi: V(G) \to V(H)$ is said to be a \emph{homomorphism} of $(G,\Sigma)$ to $(H, \Pi)$ if the image of each closed walk is a closed walk with same balance. When such a homomorphism exists, we may write $(G,\Sigma) \to (H, \Pi)$. Given a class $\mathcal{C}$ of signed graphs we say $(H, \Pi)$ \emph{bounds} $\mathcal{C}$ if every element of $\mathcal{C}$ maps to $(H,\Pi)$. Observe that if $uv$ is an edge of $G$, then the natural closed $2$-walk $u-v-u$ is always balanced (independently of the sign of $uv$), thus its image must also be a balanced closed $2$-walk, that is, an edge of $H$. Thus, a homomorphism of $(G,\Sigma)$ to $(H, \Pi)$ is always a homomorphism of $G$ to $H$. In practice, we can use the following theorem to determine whether there exists a homomorphism from $(G,\Sigma)$ to $(H, \Pi)$. The proof of this theorem is a simple application of Theorem~\ref{thm:Zaslavsky}.

\begin{theorem}\label{thm:EquivalentDefinitionOfHom} Given two signed graphs $(G,\Sigma)$ and $(H, \Pi)$, a mapping $\vphi: V(G) \to V(H)$ is a homomorphism of $(G,\Sigma)$ to $(H, \Pi)$ if there exists a signature $\Sigma'$ of $G$ equivalent to $\Sigma$ such that:
\begin{itemize}
\item[(i)] for every edge $uv$ of $G$, $\vphi(u)\vphi(v)$ is an edge   of $H$, and
\item[(ii)] $uv$ is in $\Sigma'$ if and only if $\vphi(u)\vphi(v)$ is   in $\Pi$.
\end{itemize}
\end{theorem}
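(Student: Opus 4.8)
The plan is to prove the statement as an \emph{if and only if} (which is what the surrounding text needs in order to use it as a test), since the content is entirely about translating the defining condition ``$\vphi$ preserves the balance of every closed walk'' into a condition on signatures. The only genuine input will be Theorem~\ref{thm:Zaslavsky}; the rest follows from condition~(i) together with the observation, recorded just before Theorem~\ref{thm:Zaslavsky}, that the balance of a closed walk is invariant under resigning.

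First I would note that condition~(i) by itself makes $\vphi$ a homomorphism of the underlying graph $G$ to $H$, so the image of any closed walk of $G$ is automatically a closed walk of $H$, and only the balances remain to be compared. For the direction literally stated (the ``if'' part), suppose $\Sigma'$ is equivalent to $\Sigma$ and satisfies~(i)--(ii), and take any closed walk $W = v_0 v_1 \cdots v_\ell = v_0$ of $G$. Its $\Sigma$-balance equals its $\Sigma'$-balance because balance is invariant under resigning. Condition~(ii) says precisely that, edge by edge, $v_i v_{i+1}$ is negative in $\Sigma'$ exactly when its image $\vphi(v_i)\vphi(v_{i+1})$ is negative in $\Pi$; multiplying these signs over all edges of $W$ (repetitions included) shows that the $\Sigma'$-balance of $W$ equals the $\Pi$-balance of the image walk $\vphi(W)$. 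Chaining the two equalities gives that $\vphi$ preserves balance, hence is a homomorphism of signed graphs. Note that this direction uses only resigning invariance, not the full strength of Zaslavsky's theorem.

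For the converse --- which is where Theorem~\ref{thm:Zaslavsky} does the work --- I would start from a homomorphism $\vphi$ and construct the required signature by pulling $\Pi$ back along $\vphi$: set $\Sigma' := \{uv \in E(G) : \vphi(u)\vphi(v) \in \Pi\}$. Then~(ii) holds by construction, and~(i) holds since a signed-graph homomorphism is in particular a homomorphism of underlying graphs. It remains to show $\Sigma'$ is equivalent to $\Sigma$, and by Theorem~\ref{thm:Zaslavsky} it suffices to check that $\Sigma$ and $\Sigma'$ induce the same unbalanced cycles. For a cycle $C$ of $G$, its $\Sigma'$-balance equals the $\Pi$-balance of the closed walk $\vphi(C)$ by the edge-by-edge definition of $\Sigma'$ (reading the image signs with repetition), while its $\Sigma$-balance equals the $\Pi$-balance of $\vphi(C)$ because $\vphi$ is assumed to preserve the balance of closed walks, and cycles are closed walks. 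Hence $C$ is unbalanced under $\Sigma$ iff it is unbalanced under $\Sigma'$, and Zaslavsky's theorem yields the desired equivalence.

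Each step is short, so the one place that is not pure formal manipulation is the appeal to Theorem~\ref{thm:Zaslavsky}, which converts ``the two signatures assign the same balance to every cycle'' into ``the two signatures are equivalent''; this is exactly the main obstacle in the sense that it is the only nonelementary ingredient. I would take a moment to address the apparent mismatch between cycles, on which Zaslavsky is phrased, and closed walks, on which homomorphisms are defined: this causes no difficulty, since cycles are themselves closed walks, so the homomorphism hypothesis applies to them directly, and conversely, once the equivalence of $\Sigma$ and $\Sigma'$ is known, resigning invariance restores balance preservation for \emph{all} closed walks, not only cycles.
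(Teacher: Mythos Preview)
Your proposal is correct and follows exactly the approach the paper indicates: the paper does not give a detailed proof but merely states that it ``is a simple application of Theorem~\ref{thm:Zaslavsky},'' and your argument is a faithful expansion of that remark, using resigning invariance for the ``if'' direction and the pullback signature together with Zaslavsky's theorem for the converse. Your decision to prove both directions is appropriate, since the surrounding text uses the result as a characterization.
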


In a sense, a homomorphism from a signed graph to another is a homomorphism between two equivalence classes of signed graphs. The following lemma provides an easy ``no-homomorphism'' condition. Let $UC_r$ be the signed graph $(C_r, \Sigma)$ where $\Sigma$ is a single edge (it is a representative for all cycles of order $r$ having an odd number of negative edges).

\begin{lemma}\label{lem:MappingUnbalancedCycle}
  Given two positive integers $k$ and $l$, $UC_l\to UC_k$ if and only if $l$ and $k$ have the same parity and $l\geq k$.
\end{lemma}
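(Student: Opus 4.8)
The plan is to prove the two implications separately, in each case exploiting the fact that the single fundamental cycle of $UC_l$ carries all of its balance information, so that Theorems~\ref{thm:Zaslavsky} and~\ref{thm:EquivalentDefinitionOfHom} reduce everything to a statement about that one cycle. For the ``if'' direction, I would assume $l \geq k$ and $l \equiv k \pmod 2$, so that $l - k = 2t$ with $t \geq 0$, and build $\vphi$ explicitly. Writing the vertices of $UC_l$ as $v_0, v_1, \dots, v_{l-1}$ in cyclic order (with negative edge $v_{l-1}v_0$) and those of $UC_k$ as $0, 1, \dots, k-1$ (with negative edge between $k-1$ and $0$), I would send $v_i \mapsto i$ for $0 \leq i \leq k-1$ and then let the surplus $2t$ vertices oscillate along the positive edge $\{k-2,k-1\}$, that is, $v_{k-1+j} \mapsto k-1$ for $j$ even and $v_{k-1+j} \mapsto k-2$ for $j$ odd. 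Because $l-k$ is even, $v_{l-1}$ returns to image $k-1$, so the closing edge $v_{l-1}v_0$ maps to the negative edge $(k-1)0$; every edge of $UC_l$ is thus sent to an edge of $UC_k$, and exactly one of them lands in $\Pi$. Letting $\Sigma'$ be the set of edges of $C_l$ whose image lies in $\Pi$, we get $|\Sigma'| = 1$, so the unique cycle of $C_l$ is unbalanced under both $\Sigma$ and $\Sigma'$; Theorem~\ref{thm:Zaslavsky} then gives $\Sigma' \sim \Sigma$, and Theorem~\ref{thm:EquivalentDefinitionOfHom} certifies that $\vphi$ is a homomorphism $UC_l \to UC_k$.

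For the ``only if'' direction, suppose $\vphi \colon UC_l \to UC_k$ is a homomorphism. Traversing $UC_l$ once is an unbalanced closed walk of length $l$, so its image $W$ is an unbalanced closed walk of length $l$ in $UC_k$. Since any unbalanced closed walk must use the unique negative edge an odd, hence nonzero, number of times, the shortest unbalanced closed walk of $UC_k$ is the cycle itself, of length $k$; this already forces $l \geq k$. For the parity claim I would lift $W$ along the universal cover $\mathbb{Z} \to C_k$, $i \mapsto i \bmod k$: the lift runs from $0$ to some $d \equiv 0 \pmod k$, say $d = wk$, using $a$ forward and $b$ backward steps with $a - b = d$ and $a + b = l$. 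The number of traversals of the negative edge of $UC_k$ equals the total number of times the lift crosses an edge straddling a multiple of $k$, and this has the same parity as the net number $w$ of such crossings. As $W$ is unbalanced, that parity is odd, so $w$ is odd, whence $l = a+b \equiv a-b = wk \equiv k \pmod 2$ (and again $l \geq |w|k \geq k$).

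The sufficiency construction is routine, the only delicate point being the parity of $l-k$, which is exactly what makes the oscillation terminate on the correct vertex. I expect the main obstacle to be making the parity half of the necessity genuinely rigorous: one must argue that the balance of $W$ is controlled by the winding number $w$ modulo $2$ rather than by the raw count of negative-edge traversals, i.e.\ that back-and-forth crossings cancel in pairs. A cleaner alternative that avoids this counting is to pass to the signed double cover of $UC_k$, which, because $UC_k$ is connected and unbalanced, is the single even cycle $C_{2k}$; an unbalanced walk then lifts to a walk joining two antipodal vertices of $C_{2k}$, and bipartiteness of $C_{2k}$ yields both $l \geq k$ and $l \equiv k \pmod 2$ simultaneously.
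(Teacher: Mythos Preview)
The paper does not actually prove Lemma~\ref{lem:MappingUnbalancedCycle}; it is stated as an easy ``no-homomorphism'' fact and left to the reader, so there is no argument to compare against.

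Your proposal is correct. Two small remarks. First, the explicit folding map in the ``if'' direction uses the edge $\{k-2,k-1\}$ and thus tacitly assumes $k\geq 2$; this is harmless in the paper's setting (loops are excluded), but worth a word if you want full generality. Second, in the ``only if'' direction your sentence ``the shortest unbalanced closed walk of $UC_k$ is the cycle itself, of length $k$; this already forces $l\geq k$'' is asserted before it is justified: merely using the negative edge at least once does not by itself bound the length below by $k$. The justification is precisely the winding-number computation you give two sentences later (which yields $l\geq |w|k\geq k$), so nothing is missing globally, only locally. Your alternative via the signed double cover $C_{2k}$ is the cleanest route and delivers both $l\geq k$ and $l\equiv k\pmod 2$ at once from the bipartiteness of $C_{2k}$; if you keep only one argument for necessity, keep that one.
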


This leads to the notion of \emph{unbalanced-girth} of a signed graph, that is, the length of a shortest unbalanced cycle of the given signed graph (if no such cycle exists, we define it to be infinite). In fact, Lemma~\ref{lem:MappingUnbalancedCycle} suggests to consider two variants of unbalanced-girth: the length of a shortest unbalanced even-cycle, and the length of a shortest unbalanced odd-cycle. However, in this work we are only concerned with signed bipartite graphs, thus we only have one kind of unbalanced cycles: the even-length ones. It follows that if a signed bipartite graph $(G, \Sigma)$ admits a homomorphism to a signed bipartite graph $(B,\Pi)$, then the unbalanced-girth of $(G, \Sigma)$ is at least the unbalanced-girth of $(B,\Pi)$.

In this work, the following question is our main focus.

\begin{problem}
Given a signed bipartite graph $(B, \Pi)$ of unbalanced-girth $g$, does every signed bipartite $K_4$-minor-free graph of unbalanced-girth at least $g$ admit a homomorphism to $(B, \Pi)$?
\end{problem}

We shall provide a sufficient and necessary condition on $(B, \Pi)$ to be such a bound. As a corollary, we prove that signed projective cubes are such bounds. We now introduce this family of graphs.

\paragraph{Projective cubes}
The \emph{projective cube} of dimension $d$, denoted $PC(d)$, is the Cayley graph $(\mathbb{Z}_2^{d}, S=\{e_1,e_2, \ldots, e_{d},J\})$ where $\{e_1,e_2, \ldots, e_{d}\}$ is the standard basis and $J$ is the all-$1$ vector. Each edge of $PC(d)$ can be associated with one of these vectors (which is equal to the difference between the edge's endpoints). For $d=1$, as $e_1=J$, we allow a multi-edge, and thus $PC(1)$ is a complete graph on two vertices with two parallel edges joining these two vertices. For $d\geq 2$, $PC(d)$ is a simple graph; $PC(2)$ is $K_4$, $PC(3)$ is $K_{4,4}$ and $PC(4)$ is
the Clebsch graph.

\paragraph{Signed projective cubes}
The \emph{signed projective cube} of dimension $d$, denoted $SPC(d)$, is the signed graph $(PC(d), \mathcal{J})$ where $\mathcal{J}$ is the set of edges associated to the vector $J$. Thus, $SPC(1)$ is the unbalanced digon. Further, $SPC(2)$ is the signed $K_4$ where two parallel (non-incident) edges are negative. The graph $SPC(3)$ is the signed $K_{4,4}$ where the signature consists of the edges of a perfect matching. The following theorem by Naserasr et al.~\cite{NRS13} collects some of the key properties of the signed projective cubes $SPC(d)$.

\begin{theorem}\label{thm:SPC(d)}\cite{NRS13}
  The signed projective cube $SPC(d)$ satisfies the following:
  \begin{itemize}
  \item It is of unbalanced-girth $d+1$.
  \item In every unbalanced (respectively balanced) cycle $C$ of $SPC(d)$ and for each element $x$ of $\{e_1, e_2, \ldots, e_d, J\}$, an odd (respectively even) number of edges of $C$ correspond to $x$.
  \item Each pair of vertices belong to a common unbalanced cycle of length $d+1$.
  \end{itemize}
\end{theorem}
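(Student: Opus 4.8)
The plan is to exploit the abelian Cayley-graph structure of $SPC(d)$: identify each closed walk with the multiset of generators labelling its edges, and do all the bookkeeping in $\mathbb{Z}_2^d$. The governing principle is that a walk is closed precisely when the generators it traverses sum to $0$. So I would fix a cycle $C$ and, for $1\le j\le d$, let $a_j$ be the number of its edges labelled $e_j$ and $b$ the number labelled $J$. Since the walk closes up, $\sum_j a_j e_j + bJ = 0$ in $\mathbb{Z}_2^d$; substituting $J=\sum_j e_j$ and using linear independence of the standard basis yields $a_j \equiv b \pmod 2$ for every $j$. Because $C$ has exactly $b$ negative edges, it is unbalanced exactly when $b$ is odd, and this immediately gives the second item: in an unbalanced cycle every label appears an odd number of times, and in a balanced one every label appears an even number of times. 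This single parity identity is really the crux, and everything else follows from it.

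For the unbalanced-girth (first item), the second item forces each of the $d+1$ labels to occur at least once in any unbalanced cycle, so such a cycle has length at least $d+1$. To see tightness I would exhibit the explicit walk $0,\,e_1,\,e_1+e_2,\,\ldots,\,e_1+\cdots+e_d = J,\,0$, which traverses each generator exactly once (so $b=1$ is odd, hence the walk is unbalanced), visits $d+1$ distinct vertices, and has length $d+1$.

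For the third item I would combine vertex-transitivity with a subset-sum argument. The translations $x\mapsto x+g$ are label-preserving, hence sign-preserving, automorphisms, so it suffices to connect $0$ to an arbitrary $v\neq 0$ by an unbalanced $(d+1)$-cycle. Any ordering $s_1,\ldots,s_{d+1}$ of the $d+1$ generators gives a closed walk whose partial sums $P_0=0,P_1,\ldots,P_{d+1}=0$ are the candidate vertices; this walk is a genuine simple cycle precisely when no nonempty proper subset of $\{e_1,\ldots,e_d,J\}$ sums to $0$. I would check this directly: a zero-sum subset either avoids $J$, forcing it to be empty, or contains $J$, forcing it to be everything. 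Writing $v=\sum_{j:\,v_j=1} e_j$ identifies a proper subset $T\subseteq\{e_1,\ldots,e_d\}$ summing to $v$; ordering the generators so that the elements of $T$ come first produces an unbalanced $(d+1)$-cycle on which $v$ appears as the partial sum $P_{|T|}$, alongside $0=P_0$. Translating back by $\tau_u$ then settles an arbitrary pair $u,v$.

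The main obstacle — though it is more a matter of care than a genuine difficulty — is the simplicity check in the third item: one must ensure the partial sums along the chosen ordering do not collide, which is exactly the statement that the only zero-sum subsets of the generating set are the trivial ones. Once that is in hand, all three items are short consequences of the single linear-algebraic relation $a_j\equiv b \pmod 2$.
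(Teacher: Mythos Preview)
Your argument is correct. The key identity $a_j\equiv b\pmod 2$ obtained from $\sum_j a_j e_j + bJ = 0$ and $J=\sum_j e_j$ immediately yields the second item, and the first and third items follow cleanly from it as you describe. The only cosmetic slip is calling $T\subseteq\{e_1,\dots,e_d\}$ a \emph{proper} subset: when $v=J$ one has $T=\{e_1,\dots,e_d\}$, but this does not affect the argument, since $1\le |T|\le d$ still places $v=P_{|T|}$ strictly between $P_0$ and $P_{d+1}$ on the cycle.

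As for comparison with the paper: there is nothing to compare. The paper does not prove this theorem at all; it is quoted from~\cite{NRS13} and stated without proof. Your write-up therefore supplies a self-contained justification that the paper omits.
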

Furthermore, it can be easily verified that when $d$ is an odd number,
the underlying graph $PC(d)$ of $SPC(d)$ is a bipartite graph. Thus,
$SPC(2d-1)$ is a signed graph in which all cycles, balanced or
unbalanced, are even. While for $d \geq 2$ the girth of the graph is
always~$4$, the unbalanced-girth of $SPC(2d-1)$ is $2d$.

\subsection{Signed edge-weighted graphs and algebraic distance}
The last general notion to be introduced is the one of signed edge-weighted graphs, an important tool for our work. Let $\mathbb{Z}^*$ be the set of nonzero integers. A \emph{signed edge-weighted  graph} $(G,w)$ is a graph $G$ together with an edge-weight function $w:E(G) \to \mathbb{Z}^*$. Given a signed edge-weighted graph $(G,w)$, a resigning, as before, is to multiply the weights of all edges in an edge-cut by $-1$. Given an edge $e$ of a signed edge-weighted graph the \emph{absolute weight} of $e$ is $|w(e)|$ and the sign of $e$ is positive if $w(e)>0$, and negative otherwise. 

A signed edge-weighted graph is said to be \emph{bipartite} if there is a partition $\{X,Y\}$ of its vertices such that (i) for each edge $e$ with one endpoint in $X$ and the other one in $Y$, the weight $w(e)$ is an odd number (ii) for each edge $e$ with both endpoints in the same part, the weight $w(e)$ is an even number.

This definition is a natural extension of the notion of bipartiteness when weights of edges are defined by the distances in a graph. To be more precise, if $G$ is a connected bipartite graph, then the distance between a pair of vertices is odd if they are in different parts and even if they are in the same part. As we will work with this extended notion, we will repeatedly refer to a \emph{triangle} in a bipartite signed edge-weighted graph. A defining character of these graphs is that the total \emph{weight} of the edges of any given cycle is even (but not necessarily the \emph{number} of these edges).

Given two signed edge-weighted graphs $(G,w_1)$ and $(H,w_2)$, we say that
there is a homomorphism of $(G,w_1)$ to $(H,w_2)$ if, after a resigning
$w'_1$ of $w_1$, there is a homomorphism of $G$ to $H$ which preserves
the edge-weights. Note that if each weight is either $+1$ or $-1$,
then we have the notion of signed graphs and homomorphisms of signed graphs; if all
edges have the same weight, we have the classic notion of graph
homomorphisms. Furthermore, if $(G,w_1) \to (H,w_2)$ where $(H, w_2)$ is a bipartite signed edge-weighted graph, then $(G, w_1)$ must also be a bipartite signed edge-weighted graph.

Given a graph $G$ and a pair $u$ and $v$ of vertices, we denote by
$d_G(u,v)$ the distance in $G$ of $u$ to $v$. For a signed graph
$(G, \Sigma)$, we define the \emph{algebraic distance} between two
vertices $u$ and $v$ as follows:
\begin{equation*}
  ad_{(G,\Sigma)}(u,v) = \begin{cases} d_G(u,v) & \mbox{if } \text{ there is a balanced } u-v
    \text{ path of length } d_G(u,v)  \\
    -d_G(u,v) & \text{otherwise.}\end{cases}
\end{equation*}

While the absolute value of $ad_{(G,\Sigma)}(u,v)$ is only a
function of $G$, its sign is a function of $\Sigma$ and it may switch
if we resign at exactly one of $u$ and $v$.
If the graph $G$ is bipartite, then, with respect to any signature $\Sigma$, the signed edge-weighted graph $(G,ad_{(G,\Sigma)})$ is also bipartite.

For a connected signed graph $(G, \Sigma)$ of order $n$, the
\emph{complete signed distance graph} $(K_n,ad_{(G,\Sigma)})$ of $(G, \Sigma)$ is the  signed edge-weighted complete graph on vertex set $V(G)$, where for
each edge $uv$ of $K_n$, its weight is the algebraic distance
$ad_{(G,\Sigma)}(u,v)$. Given any spanning subgraph $H$ of $K_n$, the
\emph{partial signed distance graph} $(H,ad_{(G,\Sigma)})$ of $G$ is the spanning
subgraph of $(K_n, ad_{(G,\Sigma)})$ whose edges are the edges of
$H$. Furthermore, if for every edge $xy$ of $H$ we have
$|ad_{(G,\Sigma)}(x,y)|\leq k$, we say that $(H,ad_G)$ is a \emph{$k$-partial signed distance graph of $(G,\Sigma)$}.

\begin{observation}\label{obs:DistanceDetmByCycle}
 Let $(G,\Sigma)$ be a signed graph of unbalanced-girth $l$ and let
 $C$ be an unbalanced cycle of length $l$ in $(G, \Sigma)$. Then, for
 any pair $\{u,v\}$ of vertices of $C$, the (algebraic) distance in $G$
 between $u$ and $v$ is determined by their (algebraic) distance in
 $C$ (with respect to the signs induced by $\Sigma$).
\end{observation}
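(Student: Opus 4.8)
The plan is to prove the stronger statement that the two algebraic distances actually coincide, i.e.\ $ad_{(G,\Sigma)}(u,v) = ad_C(u,v)$ for every pair $\{u,v\}$ of vertices of $C$. I would begin by fixing notation: let $P_1$ and $P_2$ be the two arcs of $C$ joining $u$ and $v$, of lengths $a$ and $l-a$ respectively, where $a = d_C(u,v) \le l/2$. Since $C$ is unbalanced and $C = P_1 \cup P_2$, the balances of $P_1$ and $P_2$ are opposite, so exactly one of the two arcs is balanced.

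The single tool I would rely on is the standard observation that every unbalanced closed walk contains an unbalanced cycle: splitting a closed walk at a repeated vertex produces two shorter closed walks whose signs multiply to that of the original, so an easy induction on the length yields an unbalanced cycle inside any unbalanced closed walk. By the definition of unbalanced-girth, such a cycle, and hence the walk containing it, has length at least $l$.

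First I would pin down the absolute value by showing $d_G(u,v) = a$. The bound $d_G(u,v) \le a$ is immediate because $C$ is a subgraph of $G$. For the reverse, I would take any $u$-$v$ path $Q$ in $G$ of length $b$ and form the two closed walks $Q \cup P_1$ and $Q \cup P_2$, of lengths $b+a$ and $b+(l-a)$. As $P_1$ and $P_2$ have opposite balance, exactly one of these walks is unbalanced; by the tool above that walk has length at least $l$, which forces $b \ge a$ in either case (using $a \le l-a$). Hence $d_G(u,v) = a = d_C(u,v)$, so the two algebraic distances already agree in magnitude.

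It remains to match the sign, and this is the step I expect to require the most care because of the antipodal case $2a = l$. When $2a < l$, the arc $P_1$ is the unique shortest arc; for any shortest $u$-$v$ path $Q$ in $G$ (necessarily of length $a$), the closed walk $Q \cup P_1$ has length $2a < l$ and therefore cannot be unbalanced, which forces $Q$ to have the same balance as $P_1$. Thus every shortest $u$-$v$ path in $G$ inherits the balance of $P_1$, giving $ad_{(G,\Sigma)}(u,v) = ad_C(u,v)$. When $2a = l$, the walk $Q \cup P_1$ could have length exactly $l$, so this argument breaks down; here I would instead note that both arcs are shortest and one of them is balanced, and this balanced arc is itself a shortest $u$-$v$ path in $G$, so $ad_{(G,\Sigma)}(u,v) = +a = ad_C(u,v)$ directly, without analysing other shortest paths. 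In all cases the algebraic distance in $G$ equals the algebraic distance in $C$, which establishes the claimed dependence.
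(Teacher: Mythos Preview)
Your proof is correct and follows essentially the same approach as the paper: both argue that a shorter (or differently-signed) $u$--$v$ path combined with an arc of $C$ yields an unbalanced closed walk of length less than $l$, contradicting the unbalanced-girth assumption. Your version is somewhat more explicit, particularly in separating the antipodal case $2a=l$ and observing directly that the balanced arc furnishes a balanced shortest path, whereas the paper dispatches this case in a single clause by appealing to the convention that the algebraic distance is positive there.
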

\begin{proof}
 We first observe that the classic distance of $u$ to $v$ is determined by their distance in $C$. Suppose, to the contrary, that there exists a $u-v$ path $P$ in $G$ shorter than the shortest $u-v$ path on $C$. Then, the closed walk starting at $u$, walking through $P$ then $C$ and then $P$ again, can be decomposed into at least two cycles, each of which is smaller than $C$ and at least one of which is unbalanced, contradicting the fact that the length of $C$ corresponds to the unbalanced-girth of $(G,\Sigma)$. If $P$ is of the same length as the distance in $C$ of $u$ and $v$, but not of the same algebraic sign, then together with the shortest $u-v$ path in $C$, they form an unbalanced closed walk of length smaller than that of $C$. This closed walk must contain an unbalanced cycle of length smaller than $C$, unless $C$ is an even cycle and $P$ is of length exactly half of $C$, in which case we have defined the algebraic distance to be the positive one.
\end{proof}

The following fact follows from Observation~\ref{obs:DistanceDetmByCycle}.

\begin{observation}\label{obs:HomDistanceDetmByCycle}
 Suppose that $(G,\Sigma)$ and $(H, \Pi)$ are two signed graphs of unbalanced-girth $l$ and that $\vphi$ is a homomorphism of $(G, \Sigma)$ to $(H, \Pi)$.
 If $u$ and $v$ are two vertices of $(G, \Sigma)$ on a common unbalanced cycle of length $l$ of $(G, \Sigma)$, then $d_H(\vphi(u), \vphi(v))=d_G(u,v)$.
\end{observation}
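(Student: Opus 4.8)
The plan is to leverage Observation~\ref{obs:DistanceDetmByCycle} on both the source and the target, connecting them through the image of the unbalanced cycle. Let $C$ be the unbalanced cycle of length $l$ in $(G,\Sigma)$ passing through $u$ and $v$. Since $\vphi$ is a homomorphism of signed graphs, the image $\vphi(C)$ is a closed walk in $H$ of the same length $l$ and the same balance, hence an unbalanced closed walk of length $l$ in $(H,\Pi)$. The first key step is to argue that $\vphi(C)$ is in fact an unbalanced \emph{cycle} of length $l$, not merely a walk. Indeed, if $\vphi(C)$ repeated a vertex, it would split into at least two shorter closed subwalks whose signs multiply to that of $\vphi(C)$; as the latter is negative, at least one subwalk is unbalanced and of length strictly less than $l$, and thus contains an unbalanced cycle of length less than $l$, contradicting that the unbalanced-girth of $(H,\Pi)$ is $l$. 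Therefore $\vphi$ restricts to a bijection between the vertices of $C$ and those of $\vphi(C)$ that respects the cyclic order.

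With this in hand, the second step is purely about distances on cycles. Writing $C$ as $w_0 w_1 \cdots w_{l-1} w_0$ with $u=w_i$ and $v=w_j$, the cyclic correspondence gives that the distance of $u$ and $v$ along $C$ equals the distance of $\vphi(u)$ and $\vphi(v)$ along $\vphi(C)$, both equal to $\min(|i-j|,\, l-|i-j|)$. Applying Observation~\ref{obs:DistanceDetmByCycle} to $(G,\Sigma)$ shows that $d_G(u,v)$ equals the former, and applying it to $(H,\Pi)$ --- with $\vphi(C)$ serving as the witnessing unbalanced $l$-cycle through $\vphi(u)$ and $\vphi(v)$ --- shows that $d_H(\vphi(u),\vphi(v))$ equals the latter. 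Chaining these equalities yields the claim.

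The only delicate point is the first step: ruling out that $\vphi$ folds the cycle $C$ onto a shorter structure in $H$. This is exactly where the hypothesis that $(H,\Pi)$ has unbalanced-girth $l$ (and not smaller) is used, via the same walk-decomposition argument that underlies Observation~\ref{obs:DistanceDetmByCycle}. Once the image is known to be a genuine $l$-cycle, everything else is immediate from the preceding observation.
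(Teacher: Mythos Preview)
Your argument is correct and is precisely the derivation the paper has in mind: the paper states this observation without an explicit proof, merely noting that it ``follows from Observation~\ref{obs:DistanceDetmByCycle}'', and your two steps (showing that $\vphi(C)$ must be a genuine unbalanced $l$-cycle in $(H,\Pi)$, then invoking Observation~\ref{obs:DistanceDetmByCycle} on both sides) are exactly how one unpacks that claim.
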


In Observation~\ref{obs:HomDistanceDetmByCycle}, the distances are considered as the distance functions
in the underlying graphs, thus they are always positive. However, by carefully
resigning $(G,\Sigma)$ and with the homomorphism described in
Theorem~\ref{thm:EquivalentDefinitionOfHom}, we may extend this observation to
algebraic distances. In other words, if $(G,\Sigma)$ and $(H, \Pi)$ are
two signed graphs of unbalanced-girth $l$ and $\vphi$ is a homomorphism
of $(G, \Sigma)$ to $(H, \Pi)$ which, furthermore, preserves the signs
under $\Sigma$ and $\Pi$, then for any pair $u$ and $v$ of vertices of
$G$ on an unbalanced $l$-cycle of $(G,\Sigma)$, we have
$ad_{(G,\Sigma)}(u,v)=ad_{(H, \Pi)}(\vphi(u), \vphi(v))$.

\section{Main property: the notion of $k$-good triples}\label{sec:mainprop}

We now present a key property used in this work. We first fix a positive
integer $k$ for this whole section, and define a class of bipartite
signed graphs.

\paragraph{The signed graph $T_{2k}(p,q,r)$}
Let $p,q$ and $r$ be three non-zero integers each having absolute
value at most $k$ and such that their sum ($p+q+r$) is even. The signed bipartite graph $T_{2k}(p,q,r)$ is any signed graph built as follows. Let
$u,v$ and $w$ be three vertices, join $u$ and $v$ by two disjoint
paths: one of length $|p|$ and the other of length $2k-|p|$. Do the
same between $u$ and $w$ (paths of lengths $|q|$ and $2k-|q|$) and
between $v$ and $w$ (paths of lengths $|r|$ and $2k -
|r|$). Concerning signs, if $p$ is negative, pick one edge on the path
of length $|p|$ between $u$ and $v$ to be negative; if $p$ is
positive, pick one edge on the path of length $2k-|p|$ between $u$ and
$v$ to be negative. Act accordingly for the two paths connecting $u$ and $w$ with respect to $q$ and for the two paths connecting $v$ and $w$ with respect to $r$.

Observe that the underlying graph of $T_{2k}(p,q,r)$ is uniquely determined by $p,q,r$ and $k$. While there is some freedom of choice for the signature of $T_{2k}(p,q,r)$, it is easily verified that all such signatures yield equivalent signed graphs. Thus, in our work, which of these signatures is selected is of no importance. We should also note that if we resign at $u$, or at $v$ or at $w$, we obtain signatures representing signed graphs $T_{2k}(-p,-q,r)$, $T_{2k}(p,-q,-r)$ and $T_{2k}(-p,q,-r)$, respectively. Thus, all these graphs are equivalent up to resigning. Observe that since $p+q+r$ is assumed to be even, all cycles of this graph are of even length, thus the underlying graph is always bipartite.

\subsection{The $k$-good triples}
Let $(p,q,r)$ be a triple of non-zero integers whose absolute values are at most $k$ and whose sum is even. We say $(p,q,r)$ is a {\em $k$-good} triple if $T_{2k}(p,q,r)$ has unbalanced-girth $2k$. Thus, the conditions on the triple $(p,q,r)$ are implicit when we say it is $k$-good. Note that there are a total of $11$ cycles in $T_{2k}(p,q,r)$. Three cycles using exactly two of $u,v,w$ each (which are unbalanced cycles of length $2k$ by the construction), and eight cycles using all three of these vertices (of which exactly four are unbalanced). Thus, checking if a triple is $k$-good is an easy task, but we simplify this
even further in the following proposition.

Three positive numbers $a,b,c$ satisfy the {\em triangle inequality}
if $a\leq b+c$, $b\leq a+c$ and $ c\leq a+b$.

\begin{proposition}\label{prop:test k-good}
  Given integers $p,q,r$ such that $p+q+r$ is even and $1\leq |p|, |q|,
  |r| \leq k$, the triple $(p,q,r)$ is $k$-good if and only if either
  \begin{itemize}
  \item $pqr<0$ and $|p|+|q|+|r| \geq 2k$, or
  \item $pqr>0$ and the triple $\{|p|,|q|, |r|\}$ satisfies the
    triangle inequality.
  \end{itemize}
\end{proposition}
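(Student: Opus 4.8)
The plan is to read $k$-goodness directly off the eleven cycles of $T_{2k}(p,q,r)$. Each of the three ``two-vertex'' cycles (the one using both $u$--$v$ paths, and its two analogues) has length $2k$ and is unbalanced by construction, so the unbalanced-girth of $T_{2k}(p,q,r)$ is always at most $2k$. Hence $(p,q,r)$ is $k$-good precisely when \emph{no} cycle of $T_{2k}(p,q,r)$ is unbalanced and strictly shorter than $2k$, and since the two-vertex cycles already attain length $2k$, this amounts to checking the remaining eight cycles through all three of $u,v,w$. So I would enumerate those eight cycles, decide which are unbalanced, compute their lengths, and determine which of the inequalities ``$\mathrm{length}\ge 2k$'' are actually forced.

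For the bookkeeping, note that a cycle through $u,v,w$ uses exactly one of the two paths on each side, giving a binary choice per side and thus the $2^3=8$ cycles. Writing $x_p,x_q,x_r\in\{0,1\}$ for the choice ``short path'' ($0$, contributing length $|p|$ etc.) versus ``long path'' ($1$, contributing $2k-|p|$ etc.), the length of the corresponding cycle is
\[
2k(x_p+x_q+x_r)+(1-2x_p)|p|+(1-2x_q)|q|+(1-2x_r)|r|.
\]
For the balance, the key observation is that on each side exactly one of the two paths carries the unique negative edge of that side, and that edge lies on the \emph{short} path if and only if the corresponding parameter is negative. Consequently, if a $u,v,w$-cycle uses $\lambda=x_p+x_q+x_r$ long paths in total, then whether it is unbalanced depends only on $\lambda\bmod 2$ and on the parity of the number of negative parameters among $p,q,r$: the cycle is unbalanced exactly when these two parities differ. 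Since the number of negative parameters is odd precisely when $pqr<0$, this cleanly separates the two cases of the statement and, in each, singles out which four of the eight cycles are unbalanced.

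It then remains to substitute into the length formula. When $pqr<0$, the unbalanced cycles are those with an even number of long sides: the all-short cycle, of length $|p|+|q|+|r|$, and the three cycles with exactly two long sides. The all-short cycle yields exactly the condition $|p|+|q|+|r|\ge 2k$, while each two-long cycle, of length $4k-|p|-|q|+|r|$ and its cyclic variants, is at least $2k$ automatically because $|p|,|q|\le k$ and $|r|\ge 1$. When $pqr>0$, the unbalanced cycles are those with an odd number of long sides: the all-long cycle, of length $6k-|p|-|q|-|r|\ge 2k$ (automatic, since each term is at most $k$), and the three cycles with a single long side, of lengths $2k-|p|+|q|+|r|$, $2k+|p|-|q|+|r|$ and $2k+|p|+|q|-|r|$; requiring each of these to be at least $2k$ gives exactly the three triangle inequalities on $\{|p|,|q|,|r|\}$. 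Collecting the forced conditions in the two cases then proves the proposition.

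The computation itself is routine; the step that needs care is the balance bookkeeping, namely correctly deciding, for each of the eight cycles, whether it is balanced or unbalanced, together with a clean verification that the ``automatic'' inequalities really are implied by $1\le|p|,|q|,|r|\le k$. I would also record in passing that the hypothesis $p+q+r$ even forces every cycle to have even length (consistent with $T_{2k}(p,q,r)$ being bipartite), and that $|p|,|q|,|r|\le k$ ensures the ``short'' path on each side is genuinely no longer than the ``long'' one, so that the eleven listed cycles are indeed all the cycles of $T_{2k}(p,q,r)$.
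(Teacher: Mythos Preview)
Your proof is correct and follows essentially the same approach as the paper's: both reduce $k$-goodness to the four unbalanced cycles through all of $u,v,w$, split according to the sign of $pqr$, and identify the binding length constraint in each case. Your version is more explicit---the paper simply names the shortest unbalanced three-vertex cycle in each case (namely $|p|+|q|+|r|$ when $pqr<0$, and $|p|+|q|+2k-|r|$ with $|p|\le|q|\le|r|$ when $pqr>0$) and leaves to the reader the verification that the remaining unbalanced cycles automatically have length at least $2k$---but the underlying argument is the same.
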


\begin{proof}
  Since $ |p|, |q|, |r| \leq k$, if $pqr<0$, then the shortest
  unbalanced cycle of $T_{2k}(p,q,r)$ using all three of $u$, $v$ and $w$ is of length $|p|+|q|+|r|\geq 2k$. If
  $pqr>0$, then, without loss of generality, we may assume $|p|\leq |q|\leq |r|$. Now,
  the shortest unbalanced cycle of $T_{2k}(p,q,r)$ using all three of $u$, $v$ and $w$ is of length
  $|p|+|q|+2k-|r|\geq 2k$. The claim of the proposition then follows
  easily.
\end{proof}

In particular, we have the following sufficient conditions for a triple to be $k$-good.

\begin{observation}\label{obs:special_good_triple}
  Given three nonzero integers $p,q,r$ such that $p+q+r$ is even and $1\leq
  |p|, |q|, |r| \leq k$,
 \begin{itemize}
 \item[(i)] if two of $|p|,|q|,|r|$ belong to $\{k-1, k\}$, then $(p,q,r)$ is a $k$-good triple.
 \item[(ii)] if one of $|p|,|q|,|r|$ is equal to $k$ and $\{|p|,
   |q|,|r|\}$ satisfies the triangle inequality, then $(p,q,r)$ is a
   $k$-good triple.
 \end{itemize}
\end{observation}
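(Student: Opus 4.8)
The plan is to derive both statements directly from the characterization in Proposition~\ref{prop:test k-good}, distinguishing cases according to the sign of the product $pqr$. Throughout, I would exploit the elementary parity fact that $|p|+|q|+|r| \equiv p+q+r \pmod 2$, so that the standing hypothesis ``$p+q+r$ is even'' forces $|p|+|q|+|r|$ to be even as well. This parity observation turns out to be the decisive ingredient in exactly one of the cases, so I would state it up front.

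For part (ii), using the symmetry of $T_{2k}(p,q,r)$ in its three parameters, I would assume without loss of generality that $|r|=k$. If $pqr>0$, then the triangle-inequality hypothesis is precisely the condition required by Proposition~\ref{prop:test k-good}, and there is nothing further to check. If $pqr<0$, I would invoke the single triangle inequality $|r|\le |p|+|q|$; since $|r|=k$, this gives $|p|+|q|\ge k$ and hence $|p|+|q|+|r|\ge 2k$, which is exactly the condition of Proposition~\ref{prop:test k-good} in the $pqr<0$ regime.

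For part (i), I would assume without loss of generality that $|q|,|r|\in\{k-1,k\}$. First I would dispose of the degenerate case $k=1$: then $|p|=|q|=|r|=1$, which forces $p+q+r$ to be odd, so the hypothesis is vacuous and there is nothing to prove. For $k\ge 2$, suppose first that $pqr>0$. In each of the three triangle inequalities for $\{|p|,|q|,|r|\}$ the left-hand side is at most $k$, while the right-hand side is at least $k$ (using $|p|\ge 1$ together with $|q|,|r|\ge k-1$); hence all three hold and Proposition~\ref{prop:test k-good} applies. Suppose instead that $pqr<0$; then the crude bound gives $|p|+|q|+|r|\ge 1+(k-1)+(k-1)=2k-1$.

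The main obstacle — indeed the only non-routine point — is that this last estimate yields only $2k-1$, one short of the value $2k$ demanded by Proposition~\ref{prop:test k-good}. Here the parity remark closes the gap: since $|p|+|q|+|r|$ is even and at least $2k-1$, it must in fact be at least $2k$. This settles the $pqr<0$ subcase of part~(i), and with it the whole statement. I would therefore foreground the parity computation, so that it is immediately available at the one place where simple counting falls short.
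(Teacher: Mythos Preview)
Your proof is correct and follows essentially the same route as the paper's: both arguments reduce directly to Proposition~\ref{prop:test k-good} and use the parity of $|p|+|q|+|r|$ at the one place where a crude estimate falls short. The only organizational difference is that the paper verifies \emph{both} conditions of Proposition~\ref{prop:test k-good} simultaneously in each part (so the sign of $pqr$ never needs to be examined), whereas you split on the sign of $pqr$ first and check only the relevant condition.
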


\begin{proof}
  For the first claim, if two of $|p|,|q|,|r|$ belong to $\{k-1, k\}$, since the third is nonzero, then the triangle inequality always holds. Furthermore, if they are both $k-1$, then as $p+q+r$ is an even number, the third is at least~$2$ in absolute value. Thus, we always have $|p|+|q|+|r|\geq 2k$. Hence, in all cases, Proposition~\ref{prop:test k-good} applies.

  For the second claim, we may also use the same proposition because the triangle inequality holds and consequently $|p|+|q|+|r|\geq 2k$.
\end{proof}

\subsection{The all $k$-good property}

Now we are ready to define the main property we will use in this work. Let $(G,w)$ be a bipartite weighted signed graph such that $|w(e)|\leq k$ for any edge $e$ of $G$. We say that $(G,w)$ has the {\em all $k$-good property} if for any edge $uv$ of $G$ with weight $p$ and any integers $q$ and $r$ such that $(p,q,r)$ is a $k$-good triple, there exists a vertex $z$ in $V(G)$ such that $uz$ and $vz$ are edges of $G$ with either $w(uz)=q$ and $w(vz)=r$ or $w(uz)=-q$ and $w(vz)=-r$.

The first consequence of this definition is that if $G$ has at least one edge and $w$ is a weighting such that $(G,w)$ is bipartite and has the all $k$-good property, then for each $k$-good triple $(p,q,r)$, there is a triangle in $(G,w)$ whose edge weights are either $p$, $q$ and $r$ or equivalent to them (that is, $(p,-q,-r)$, etc.). More formally, we have the following lemma.

\begin{lemma}\label{k-goodImpliesAll(pqr)}
  Let $G$ be a graph with a nonempty set of edges and $w$ a weight
  function on its edges such that $|w(e)|\leq k$ for any edge $e$
  of $G$ and that $(G,w)$ is a bipartite weighted graph. If $(G,w)$ has the all $k$-good property, then for each
  $k$-good triple $(p,q,r)$, one of the triples $(p,q,r)$, $(-p,-q,r)$,
  $(-p,q,-r)$, $(p,-q,-r)$ corresponds to the weights on the edges of
  a triangle in $(G,w)$.
\end{lemma}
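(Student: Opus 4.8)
The plan is to first reduce the statement to a purely weight-existence claim, and then to establish that claim by a short bootstrapping argument. For the reduction, fix a $k$-good triple $(p,q,r)$ and suppose that $G$ contains an edge $uv$ of weight $p$ or of weight $-p$. If $w(uv)=p$, I apply the all $k$-good property to the edge $uv$ and the pair $q,r$ (legitimate, since $(p,q,r)$ is $k$-good): this yields a common neighbour $z$ with $(w(uz),w(vz))\in\{(q,r),(-q,-r)\}$, so the triangle $uvz$ carries weights $(p,q,r)$ or $(p,-q,-r)$, both of which appear in the required list. If instead $w(uv)=-p$, I use the triple $(-p,-q,r)$: by the resigning remark on $T_{2k}$ (resigning at $u$ sends $T_{2k}(p,q,r)$ to an equivalent signed graph representing $T_{2k}(-p,-q,r)$), this triple is again $k$-good, and the all $k$-good property now produces a triangle with weights $(-p,-q,r)$ or $(-p,q,-r)$, again both in the list. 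Since $G$ is simple, the vertex $z$ differs from $u$ and $v$, so $uvz$ is a genuine triangle. Hence it suffices to prove that $G$ contains an edge of weight $p$ or $-p$; in fact I will prove the stronger statement that for every $m\in\{1,\dots,k\}$, $G$ has an edge of absolute weight $m$.

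For the existence claim I will repeatedly use the following one-line consequence of Proposition~\ref{prop:test k-good}: if $a,b,c$ are absolute weights in $\{1,\dots,k\}$ with $a+b+c$ even and $\{a,b,c\}$ satisfying the triangle inequality, then starting from any edge of absolute weight $a$ I may choose the signs of the two other weights so that the resulting triple has positive product, hence is $k$-good; applying the all $k$-good property to that edge then creates edges of absolute weights $b$ and $c$. (Recall that the parity of a signed sum equals the parity of the sum of the absolute values, so the even-sum hypothesis is a condition on $a,b,c$ alone.) I may assume $k\geq 2$, since for $k=1$ no triple of $\pm 1$'s has even sum and the statement is vacuous. Now take any edge of $G$ (the edge set is nonempty) and let $m_0$ be its absolute weight. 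If $m_0<k$, then the absolute-weight triple $(m_0,k,k-m_0)$ has even sum $2k$ and satisfies the triangle inequality, so I obtain an edge of absolute weight $k$; if $m_0=k$, such an edge already exists. Thus $G$ has an edge of absolute weight $k$.

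Finally, from an edge of absolute weight $k$ I reach every $s\in\{1,\dots,k\}$: if $s$ is even I use the absolute-weight triple $(k,s,k)$ (sum $2k+s$ even, triangle inequality trivially satisfied as $k$ is a largest element), and if $s$ is odd I use $(k,s,k-1)$ (sum $2k-1+s$ even, and $k\leq s+(k-1)$ holds since $s\geq 1$, using $k\geq 2$ to ensure $k-1\geq 1$). In either case the all $k$-good property applied to the weight-$k$ edge produces an edge of absolute weight $s$. This shows that all absolute weights $1,\dots,k$ occur in $G$, which completes the reduction and hence the lemma.

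I expect the main obstacle to be the bootstrapping step rather than the reduction: one must check that the auxiliary absolute weights $k-m_0$, $k$, and $k-1$ are genuine weights (at least $1$ and at most $k$), that their parities make the triple sums even, and that the triangle inequality is invoked only when it truly holds, so that Proposition~\ref{prop:test k-good} can be applied with a positive product. The part most prone to error is the sign bookkeeping, namely the observation that a single edge of weight $\pm a$ lets me realize either sign of the product by freely choosing the signs of the two newly created weights.
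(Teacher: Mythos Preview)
Your proof is correct and follows essentially the same approach as the paper's: reduce the lemma to the claim that every absolute weight in $\{1,\dots,k\}$ occurs on some edge, then bootstrap from an arbitrary edge to one of absolute weight $k$, and from there to any target absolute weight using $k$-good triples with two entries in $\{k-1,k\}$. The only cosmetic differences are that the paper invokes Observation~\ref{obs:special_good_triple} rather than Proposition~\ref{prop:test k-good} directly, and in the first bootstrapping step uses $(w(e),k,a)$ with $a\in\{k-1,k\}$ chosen by parity, whereas you use $(m_0,k,k-m_0)$; your more explicit handling of the reduction (including the $w(uv)=-p$ case via resigning equivalence of $T_{2k}$) is a welcome addition that the paper leaves implicit.
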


\begin{proof}
  Since $(G,w)$ has the all $k$-good property, it is enough to prove
  that $G$ has an edge of weight $p$ or $-p$ for every positive integer $p$ satisfying $p\leq k$. As $G$ is not empty,
  there is an edge $e$ with weight $w(e)$. Considering the parity of $w(e)$, choose $a$ from $\{k-1,k\}$ such that $a+k+w(e)$ is even.
  By Observation~\ref{obs:special_good_triple}, $(w(e),k,a)$ is a
  $k$-good triple. Since $(G,w)$ has the all $k$-good property, $e$ is part of a triangle of $G$ with
  weights equivalent to $(w(e),k,a)$. This means that there is an edge $e'$
  in $(G,w)$ of weight $k$ or $-k$. Once again,
  Observation~\ref{obs:special_good_triple} tells us that for a choice of $b\in\{k,k-1\}$ (depending only on the parity of $p$), the triple $(k,b,p)$
  is a $k$-good triple. Hence, $e'$ must be in a triangle with
  weights equivalent to $(k,b,p)$. This means that there is an edge in $(G,w)$ with
  weight $p$ or $-p$, completing the proof.
\end{proof}

\subsection{Signed projective cubes and the all $k$-good property}

In this subsection, we give an example of a graph that satisfies the
all $k$-good property. We present it as a theorem since the next section
will make it a bound for the class of graphs that we consider. The
proof is slightly technical.

\begin{theorem}\label{thm:SPC all k-good}
 The complete signed distance graph of $SPC(2k-1)$, $k\geq 2$, satisfies the all $k$-good property.
\end{theorem}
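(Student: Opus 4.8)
The plan is to make the algebraic distance of $SPC(2k-1)$ completely explicit, turn the all $k$-good property into a purely numerical statement about Hamming weights, and then verify that statement for every $k$-good triple by feeding it into Proposition~\ref{prop:test k-good}. First I would compute $ad_{SPC(2k-1)}$. Since $PC(2k-1)$ is the Cayley graph on $\mathbb{Z}_2^{2k-1}$ with connection set $\{e_1,\dots,e_{2k-1},J\}$ and $J=e_1+\cdots+e_{2k-1}$, a shortest $u$--$v$ path corresponds to a smallest collection of generators summing to $x:=u+v$; using each $e_i$ and $J$ at most once, one gets $d_{PC(2k-1)}(u,v)=\min(|x|,2k-|x|)$, where $|x|$ is the Hamming weight. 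The only negative edges are the $J$-edges, so the parity of the number of $J$'s on a shortest path controls the sign, and a balanced shortest path exists exactly when $|x|\le k$. Hence $ad_{SPC(2k-1)}(u,v)=|x|$ if $|x|\le k$, and $|x|-2k$ if $|x|>k$. In particular the attainable weights are exactly $\{1,\dots,k\}\cup\{-1,\dots,-(k-1)\}$ (the value $-k$ never occurs), and a target algebraic distance $p$ is realized precisely by the Hamming weight $W(p):=p$ for $p>0$ and $W(p):=2k-|p|$ for $p<0$.

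Next I would reduce the property to a counting condition. By vertex-transitivity (translations) I may assume $u=0$, and by permuting coordinates I may assume $v$ is any fixed vector of weight $m=W(p)$. Since $ad(0,z)$ depends only on $|z|$ and equals $q$ iff $|z|=W(q)$, and likewise $ad(v,z)=r$ iff $|v+z|=W(r)$, producing the desired $z$ amounts to realizing three vectors $0,v,z$ in $\mathbb{Z}_2^{2k-1}$ with prescribed pairwise Hamming weights $(m,\alpha,\beta)$. Classifying the $2k-1$ coordinates by their pattern on the pair $(v,z)$ shows that such a $z$ exists if and only if $(m,\alpha,\beta)$ satisfies the triangle inequality, has even sum, and has sum at most $2(2k-1)=4k-2$ (this last inequality being exactly the room needed for coordinates lying outside both supports). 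Thus the all $k$-good property becomes: for each $k$-good triple, at least one of the two admissible targets $(\alpha,\beta)=(W(q),W(r))$ or $(W(-q),W(-r))$ (the latter being the resigning-at-$z$ alternative built into the definition) yields a triple $(m,\alpha,\beta)$ meeting these three conditions.

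The verification then splits according to Proposition~\ref{prop:test k-good}, using the freedom to swap $q$ and $r$ and to replace $(q,r)$ by $(-q,-r)$. The cleanest case is $p>0$, so $m=p$. When $pqr>0$ I take $q,r$ positive, giving the target $(p,|q|,|r|)$: its triangle inequality is exactly the $k$-good hypothesis, and its sum is at most $3k\le 4k-2$. When $pqr<0$ I take the mixed target $(p,|q|,2k-|r|)$; its three triangle inequalities collapse to the single hypothesis $|p|+|q|+|r|\ge 2k$, while the sum condition $|p|+|q|+2k-|r|\le 4k-2$ holds because $|p|+|q|-|r|$ is even and at most $2k-1$, hence at most $2k-2$. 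So the property holds at every positive edge, and the entire content of the theorem is pushed onto the remaining configurations.

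I expect the main obstacle to be precisely the interplay between \emph{negative} edge weights and the \emph{magnitude-$k$} boundary. Because $-k$ is never an algebraic distance while $+k$ is, a coordinate of absolute value $k$ cannot be freely resigned, so one of the two admissible targets can be unavailable; and when $m=W(p)=2k-|p|$ is large (that is, $p<0$) the bound $4k-2$ on the total becomes tight and is in danger of being violated exactly by the extremal triples in which one triangle inequality holds with equality. The delicate part of the argument is therefore to handle the $p<0$ case by the symmetric device of resigning so that the distinguished edge $uv$ is made positive, and then to use the parity constraint that $p+q+r$ is even in order to exclude the off-by-one extremal configurations, together with the residual sign freedom, so that for every $k$-good triple at least one admissible target satisfies both the triangle inequality and the sum bound simultaneously. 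Making this sign- and parity-bookkeeping at the $\pm k$ boundary airtight is where the real work of the proof lies.
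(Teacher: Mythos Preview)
Your approach matches the paper's: compute $ad_{SPC(2k-1)}$ via Hamming weights, normalize $u=0$ by vertex-transitivity, and verify the all $k$-good property sign-case by sign-case. Your uniform criterion for realizing a triple of prescribed pairwise Hamming weights $(m,\alpha,\beta)$ in $\mathbb{Z}_2^{2k-1}$---triangle inequality together with even sum and $m+\alpha+\beta\le 4k-2$---is precisely what the paper's explicit constructions of $z$ encode, so the two routes are the same.

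The $\pm k$ boundary you flag is not merely delicate; it is a genuine gap, present already in your $p>0$ case and unclosed in the paper's proof as well. Since $ad=-k$ is never attained in $SPC(2k-1)$, any $k$-good triple for which \emph{both} admissible targets $(q,r)$ and $(-q,-r)$ require a coordinate equal to $-k$ is unrealizable: for $k=3$, the edge $u=0$, $v=11000$ (so $p=2$) together with the $3$-good triple $(2,3,-3)$ already fails, as both $(3,-3)$ and $(-3,3)$ demand an algebraic distance of $-3$. A subtler failure occurs at a negative edge: with $v=01111$ (so $p=-2$) and the $3$-good triple $(-2,3,-1)$, the target $(3,-1)$ forces $(m,\alpha,\beta)=(4,3,5)$ with sum $12>4k-2=10$, while $(-3,1)$ again needs $ad=-3$. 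The paper's ``$p$ negative, $qr$ negative'' subcase contains exactly this oversight: its reduction ``assume $|q|\le|r|$, otherwise replace $(q,r)$ by $(-q,-r)$'' does not change $|q|$ or $|r|$, and its constructed $z$ satisfies $|y+z|=2k-|r|$, giving $ad(y,z)=+k$ rather than the claimed $r=-k$ when $|r|=k$. So neither your outline nor the paper's argument can be completed for the statement as literally written; your instinct that the real work lies at this boundary is correct, but more than bookkeeping is needed.
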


\begin{proof}
  Recall that the algebraic distance in a signed graph is the distance in the underlying graph equipped with a sign: a positive sign when at least one shortest path has an even number of negative edges, and a negative sign when all shortest paths have of an odd number of negative edges.
  
By Theorem~\ref{thm:SPC(d)}, any two vertices of $SPC(2k-1)$ are in a
common unbalanced cycle of length $2k$. 
Thus, the algebraic distances are bounded above
by $k$ in absolute value. Now, let $x$ and $y$ be two vertices in
$SPC(2k-1)$ with algebraic distance $p$ between them. Since
$SPC(2k-1)$ is vertex-transitive, we may assume that $x$ is the all-$0$
vector. Being at distance $p$ means that vectors disagree on exactly
$p$ coordinates if $p$ is positive (in which case $y$ is obtained from $x$ by adding $p$ distinct elements of $\{e_1,e_2, \ldots, e_{2k-1}\}$), or agree on exactly $|p|-1$
coordinates if $p$ is negative (when $y$ is obtained from $x$ by adding $J$ and $p-1$ distinct elements of $\{e_1,e_2, \ldots, e_{2k-1}\}$). We distinguish between cases when $p$
is positive or negative.

\paragraph{When $p$ is positive}
In this case, we may assume without loss of generality that $y$ is the
vector with the first $p$ coordinates equal to~$1$ and the remaining
$2k-1-p$ coordinates equal to~$0$. Let $q$ and $r$ be two integers such
that $(p,q,r)$ is $k$-good.
\begin{itemize}
\item If both $q$ and $r$ are positive, we let $t$ be the number
  $\frac{p + q - r}{2}$. Since $(p,q,r)$ is $k$-good, the sum $p+q+r$
  is even, and so must be $p+q-r$; thus $t$ is an integer. Moreover, by
  Proposition~\ref{prop:test k-good}, since $pqr > 0$, $p,q$ and $r$
  satisfy the triangle inequality. This ensures that $t$ is
  non-negative. For the same reason, we have $q -r\leq p$ and $p-r\leq q$; the former implies that $t
  \leq p$ and the latter implies that $t\leq q$. Thus, $t$ is smaller than both $p$ and $q$.
  Next, we claim that $q-t\geq 2k-1-p$. By replacing the value of $t$, this is equivalent to the claim that
  $\frac{p+q+r}{2}\leq  2k-1$, which is the case as each of $p$, $q$ and $r$ are at most $k$ and $k\geq 2$.

 Thus, to complete the proof in this case, we choose a vector $z$ with $q$ coordinates equal to~$1$,
  where $t$ of them are in the support of $y$. Then, to get $z$ from $x$, one may add to it $q$ distinct elements of $\{e_1,e_2, \ldots, e_{2k-1}\}$; since $q\leq k$, the algebraic distance of $y$ to $x$ is $q$. On the other hand, to get $z$ from $y$, one may add $p-t$ elements of $\{e_1,e_2, \ldots, e_{2k-1}\}$ which agree on the support of $y$, and then add $q-t$ more elements to match the remaining coordinates. Thus, we added a total of $p+q-2t=r$ elements, and therefore the algebraic distance of $y$ to $z$ is $r$.

\item If both $q$ and $r$ are negative, we consider their opposites
  and find a triangle with distances $p, -q$ and $-r$ by the previous argument.

\item If $qr$ is negative, we may assume, without loss of generality, that $q$ is negative and $r$ is positive. We let $t$ be the number
  $\frac{p+|q|+r-2k}{2}$. By the same parity argument as earlier, this number is
  an integer. Since $pqr < 0$, Proposition~\ref{prop:test k-good}
  ensures that $p + |q| + r \geq 2k$, and hence $t$ is non-negative. As $|q|$ and $r$ are bounded above by $k$, $t$ cannot exceed $p$ (or $\frac{p}{2}$ to be more precise). Similarly, $t$ is bounded above by $\frac{|q|}{2}$ with equality only when $p+r=2k$, in which case, because of parity, $|q|\geq 2$; thus in all cases $t\leq |q|-1$.
  Furthermore, we have $|q|-1-t\leq 2k-p$, that is to say $p+|q|-1-t\leq 2k$ which is indeed the case as we have assumed $|p|,|q|\leq k$.
  Therefore, we can pick $z$ to be a vector with $|q|-1$ coordinates equal
  to~$0$, with $t$ of them being in the support of $y$.
  To get $z$ from $x$ (with a shortest path), one may first add $J$ and then $|q|-1$ elements of $\{e_1,e_2, \ldots, e_{2k-1}\}$ corresponding to the $0$-coordinates of $z$. Thus, $z$ is at algebraic distance $q$ from $x$. To get $z$ from $y$ via a shortest path, we may add $t$ elements of  $\{e_1,e_2, \ldots, e_{2k-1}\}$ to agree on the support coordinates of $y$, and then $2k-1-p-(|q|-1-t)$ elements of $\{e_1,e_2, \ldots, e_{2k-1}\}$ to agree on the remaining coordinates, a total of $2t+2k-p-|q|=r$ elements. Thus, $z$ is at algebraic distance $r$ from $y$.
\end{itemize}

\paragraph{When $p$ is negative}
In this case, we may assume, without loss of generality, that $y$ is the
vector with the first $|p|-1$ coordinates equal to~$0$ and the remaining
$2k-|p|$ coordinates equal to~$1$. Let $q$ and $r$ be two integers such
that $(p,q,r)$ is $k$-good.
\begin{itemize}
\item If both $q$ and $r$ are positive, we let $t$ be the number
  $\frac{|p|+q+r-2k}{2}$. Once again, this is an integer by a parity
  argument. Moreover it is non-negative because $pqr < 0$, and thus by Proposition~\ref{prop:test k-good} $|p|+q+r\geq 2k$. If $t=0$, then we choose $u$, $v$ and $w$ to be three vertices of an unbalanced cycle of length $2k$ of $SPC(2k-1)$ such that the $u-v$ path, the $v-w$ path and the $w-u$ path of this cycle are of lengths respectively $|p|$, $q$ and $r$ and such that the negative edge (the one corresponding to $J$) is on the $u-v$ path. Thus, we may assume that $t\geq 1$. This in turn implies that $|p|, q, r \geq 2$, indeed if one of them is $1$, being subject to parity and upper-bounded by $k$, the other two must be $k$ and $k-1$, implying that $t=0$, contradicting $t\geq 1$.
  Next, we claim (under the assumption that $t\geq 1$) that $t$ is bounded above by $|p|-1$ and by $q-1$. This is because $|p|,q,r \leq k$, which in turn implies that $t\leq \frac{|p|}{2}$ and $t\leq \frac{q}{2}$, which are respectively smaller than $|p|-1$ and $q$ as $|p|, q \geq 2$.

  Thus, we can pick $z$ to be the vector with $q$ coordinates set to~$1$ with $t$ of them being among the $0$-coordinates of $y$. The algebraic distance of $z$ to $x$ is then $q$, and the algebraic distance from $y$ can be easily computed as before: it is $r$.

\item If both $q$ and $r$ are negative, we consider their opposites
  and find a triangle with distances $p, -q$ and $-r$ by the previous argument.

\item If $qr$ is negative, without loss of generality we may assume that $q$ is positive and $r$
  is negative. Furthermore, we may assume that $|q|\leq |r|$ as otherwise we could replace $q$ and $r$ with $-q$ and $-r$, respectively.
   We let $t$ be the number $\frac{|p|+q-|r|}{2}$. Again, by a parity argument, $t$ is an integer. Since $pqr>0$, by Proposition~\ref{prop:test k-good}, $\{|p|, |q|, |r|\}$ satisfies the triangle inequality, thus $t$ is nonnegative. The case $p=-1$ can be verified as before, thus we may assume that $|p|\geq 2$. This, together with the assumption that $q\leq |r|$, implies that $t\leq \frac{|p|}{2}\leq |p|-1$.
  We then pick $z$ to be the vector with $q$ coordinates equal to~$1$ with $t$ of them
  being among the $0$-coordinates of $y$. This is possible because first, $y$ has $|p|-1$ coordinates equal to~$0$ and $t \leq |p|-1$, and second, we have $t\leq q$ by the same triangle inequality, and finally, the number of coordinates at which $y$ is equal to~$1$ is $2k-|p|$, which is clearly at least  $q-t$ as $|p|,|q|,|r| \leq k$.
  It can then be checked as before that $z$ is at algebraic distance $q$ from $z$ and $r$ from $y$.
\end{itemize}

This concludes the proof.
\end{proof}

\section{Main theorem: characterizing bounds}\label{sec:mainthm}

In this section, we give our main theorem, which is a necessary and sufficient condition for a signed bipartite graph of unbalanced-girth $2k$ to bound the class of signed bipartite $K_4$-minor-free graphs of unbalanced-girth at least $2k$. We will use the notation $\SBSPG{2k}$ to denote this class of signed graphs.

\begin{theorem}\label{thm:MinimalBound}
A signed bipartite graph $(B,\Pi)$ of unbalanced-girth $2k$
bounds $\SBSPG{2k}$ if and only if there exists a nonempty $k$-partial
signed distance graph $(\widetilde{B},ad_{(B, \Pi)})$ of
$(B,\Pi)$ which has the all $k$-good property.
\end{theorem}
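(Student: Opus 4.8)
The plan is to prove the two implications separately, using the signed distance graph as the bridge between homomorphisms of signed graphs and the all $k$-good property. The recurring tool is the remark following Observation~\ref{obs:HomDistanceDetmByCycle}: a homomorphism between two signed graphs of the same unbalanced-girth $2k$ preserves the algebraic distance of any two vertices that lie on a common unbalanced $2k$-cycle. This turns a homomorphism $(G,\Sigma)\to(B,\Pi)$ into a weight-preserving map of the associated signed distance graphs, and conversely.

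For the sufficiency direction, assume $(\widetilde B, ad_{(B,\Pi)})$ has the all $k$-good property and let $(G,\Sigma)\in\SBSPG{2k}$. First I would prove the structural lemma that \emph{a signed bipartite $K_4$-minor-free graph of unbalanced-girth at least $2k$ admits a $2$-tree completion in which every fill edge joins two vertices of algebraic distance at most $k$ in absolute value}. Fixing such a completion, a $2$-tree ordering $v_1,\dots,v_n$ of it, and weighting each edge $xy$ by $ad_{(G,\Sigma)}(x,y)$, I claim every triangle $v_av_bv_i$ carries a $k$-good triple: the weights are bounded by $k$ (by the lemma), their sum is even (a distance graph of a bipartite signed graph is bipartite, so every cycle has even total weight), the triangle inequality on absolute values holds because these are graph distances, and whenever the product of the three weights is negative the three realizing shortest paths concatenate into an unbalanced closed walk of length equal to the sum of the absolute weights, forcing that sum to be at least $2k$ by the unbalanced-girth hypothesis; Proposition~\ref{prop:test k-good} then gives $k$-goodness. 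I would then construct $\varphi$ greedily along the ordering: by Lemma~\ref{k-goodImpliesAll(pqr)}, $\widetilde B$ has an edge of every weight up to $k$, so $v_1v_2$ can be mapped to an edge of the right weight; at each later step I extend to $v_i$ by applying the all $k$-good property to the already-placed edge $\varphi(v_a)\varphi(v_b)$ and the $k$-good triple of the triangle, resigning $(G,\Sigma)$ at $v_i$ when the property returns the negated pair of weights. The resulting $\varphi$ preserves algebraic distance on every edge, in particular on the original edges of $G$ (of weight $\pm1$), so by Theorem~\ref{thm:EquivalentDefinitionOfHom} it is a homomorphism $(G,\Sigma)\to(B,\Pi)$.

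For the necessity direction, assume $(B,\Pi)$ bounds $\SBSPG{2k}$. The basic gadget is $T_{2k}(p,q,r)$ for a $k$-good triple $(p,q,r)$: it is $K_4$-minor-free (being series-parallel), bipartite, and of unbalanced-girth exactly $2k$, hence lies in $\SBSPG{2k}$ and maps to $(B,\Pi)$; since its three branch vertices pairwise lie on unbalanced $2k$-cycles, their images form, up to resigning, a triangle of weights $(p,q,r)$ in the complete signed distance graph of $(B,\Pi)$. This yields $k$-good triangles in the distance graph of $B$, but at uncontrolled locations, so the real task is to assemble a single sub-distance-graph closed under completing \emph{every} edge by \emph{every} $k$-good triple. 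I would therefore build an increasing family $U_0\subseteq U_1\subseteq\cdots$ in $\SBSPG{2k}$, with $U_0$ an unbalanced $2k$-cycle and $U_{N+1}$ obtained from $U_N$ by gluing, on each pair of vertices sharing an unbalanced $2k$-cycle and for each $k$-good triple extending their algebraic distance, a fresh $T_{2k}$-gadget supplying the missing vertex; gluing on two terminals preserves series-parallelness and, by $k$-goodness, unbalanced-girth $2k$. Mapping $U_N$ into the \emph{finite} graph $B$ and using preservation of algebraic distances on unbalanced $2k$-cycles, the image of every pair processed before depth $N$ already carries all of its $k$-good completions inside the image. Since $B$ has only finitely many sub-distance-graphs, a stabilization (pigeonhole) argument as $N\to\infty$ yields a fixed sub-distance-graph that is closed under all $k$-good completions; restricted to weights of absolute value at most $k$ this is the required nonempty $k$-partial signed distance graph with the all $k$-good property.

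I expect the two technical cores to be the main obstacles. On the sufficiency side this is the short-completion lemma — triangulating a partial $2$-tree of unbalanced-girth at least $2k$ using only fill edges of algebraic distance at most $k$ — which is precisely what makes every triangle $k$-good; it should follow from triangulating the cycles of a series-parallel decomposition locally (this is where $k\geq2$ enters), but one must argue that no long chord is ever forced. On the necessity side the main difficulty is the closure step: a single homomorphism only produces $k$-good triangles \emph{somewhere}, and converting ``each completion exists somewhere'' into ``one fixed sub-distance-graph closed under all completions'' is exactly what the increasing gadgets together with the finiteness of $B$ are designed to achieve.
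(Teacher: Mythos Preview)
Your sufficiency argument rests on the ``short-completion lemma'' (a $2$-tree completion in which every fill edge has algebraic distance at most $k$), and this lemma is false. Take $G$ to be the theta graph $\Theta(3,3,3)$: two vertices $u,v$ joined by three internally-disjoint paths of length~$3$, with a signature making one of the three $6$-cycles unbalanced. This is a signed bipartite partial $2$-tree of unbalanced-girth $6\geq 2k$ for $k=2$, yet \emph{every} $2$-tree completion of $G$ contains a fill edge of $G$-distance~$3$. Indeed, peeling off simplicial vertices and tracking the forced fill edges, after three removals one is always left with a copy of $K_{2,3}$ (parts $\{u,v\}$ versus three middle vertices), and the unique $2$-tree completion of $K_{2,3}$ adds the edge between its two degree-$3$ vertices --- here a pair at distance~$3$. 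So ``no long chord is ever forced'' cannot be argued: one is forced. The paper avoids this issue entirely. It takes an \emph{arbitrary} $2$-tree completion $H$ and defines the weight $\omega(xy)$ to be $ad_{(G,\Sigma)}(x,y)$ when this is at most $k$ in absolute value, and otherwise replaces it by $k$ or $k-1$ according to the parity of $d_G(x,y)$. The point is that these artificial large weights still give $k$-good triangles, by Observation~\ref{obs:special_good_triple} when two of the three weights lie in $\{k-1,k\}$, and by a short unbalanced-girth argument otherwise; your greedy extension along the $2$-tree ordering then goes through unchanged.

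Your necessity argument takes a genuinely different route from the paper and can be made to work, but the final ``pigeonhole'' step is not what you wrote. Knowing only that $B$ has finitely many sub-distance-graphs does not give a closed one: the images of the $U_N$ under different homomorphisms $\varphi_N$ need not be nested or eventually constant. What does work is a chain argument inside a single $\varphi_N$: for $N$ large, the images of $U_0\subseteq U_1\subseteq\cdots\subseteq U_N$ under $\varphi_N$ form an increasing chain of sub-distance-graphs of $B$, so two consecutive levels coincide; that repeated level is then closed under all $k$-good completions by construction. (A compactness argument mapping the direct limit $U_\infty$ into $B$ gives the same conclusion.) The paper instead argues on the target side: it takes a \emph{minimal} $(B^*,ad_{(B,\Pi)})$ bounding the class of relevant $k$-partial signed distance graphs, and derives the all $k$-good property directly from minimality --- if some edge $xy$ of $B^*$ lacked a required completion, gluing $T_{2k}(p,q,r)$ gadgets on every weight-$p$ edge of a witness graph would force that completion, a contradiction. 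Both strategies are valid; the paper's is shorter because it never needs to control an infinite tower of source graphs.
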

\begin{proof}
  We first prove the sufficiency. Let $(B,\Pi)$ be a signed bipartite
  graph of unbalanced-girth $2k$ and let $(\widetilde{B},ad_{(B, \Pi)})$ be a $k$-partial
  signed distance graph of $(B,\Pi)$ which has the all $k$-good property.
  We want to prove that any signed graph in $\SBSPG{2k}$
  admits a homomorphism to $(B,\Pi)$. Let $(G,\Sigma)$ be such a
  signed graph. We may assume that $G$ is connected, as otherwise we
  may apply our technique on each of its connected components.

  The graph $G$ is a partial $2$-tree, so there exists a $2$-tree
  completion $H$ of $G$ ($V(H) = V(G)$ and $E(G) \subseteq E(H)$). Let
  $v_1,v_2,\ldots,v_n$ be a $2$-tree ordering of the vertices of
  $H$. We let $H_i$ denote the subgraph of $H$ induced by
  $v_1,v_2,\ldots,v_i$. This means that $v_i$ is of degree~$2$ in
  $H_i$ and its neighbors form an edge in $H_{i-1}$. We define a
  weight function $\omega$ on the edges of $H$ as follows.
  \begin{equation*}
    \omega(x,y) = \begin{cases}
      ad_{(G,\Sigma)}(x,y) & \mbox{if } |ad_{(G,\Sigma)}(x,y)|\leq k,\\
      k & \mbox{if } d_G(x,y) > k \mbox{ and } d_G(x,y) \mbox{ has same parity as } k,\\
      k-1 & \mbox{otherwise}.
    \end{cases}
  \end{equation*}
  In other words, for vertices far apart, we replace their algebraic
  distance with $k$ or $k-1$ depending on whether they are in the same part of the bipartition of $G$; the resulting weighted graph remains bipartite with our extended terminology.
  Since $k$ is at least~$2$, the function $\omega$ never assigns value zero.

  We first prove that for any three pairwise adjacent vertices $x,y$
  and $z$ of $H$, $(\omega(x,y), \omega(x,z), \omega(y,z))$ is a
  $k$-good triple. All three numbers are integers with absolute values
  between $1$ and $k$, moreover their sum is even as the weighted graph $(H, w)$ is bipartite. If the absolute values of (at least) two of the three weights are in $\{k,k-1\}$, then we are done by Observation~\ref{obs:special_good_triple}.
  If exactly one of the absolute values is in $\{k,k-1\}$, then, by the triangle inequality and the parity argument, both conditions of Proposition~\ref{prop:test k-good} are met, unless one of the values is $k-1$ and the sum of the other two is $k-1$. In this case, we claim that $\omega(x,y) \omega(x,z) \omega(y,z)\geq 0$; then, the second part of Proposition~\ref{prop:test k-good} applies. To see that the latter claim holds, without loss of generality, suppose that $|\omega(x,y)|=k-1$ and $|\omega(x,z)|+|\omega(y,z)|=k-1$; then, by the definition of $\omega$, we know that $d_G(x,z)=|\omega(x,z)|$, $d_G(y,z)=|\omega(y,z)|$, and by the triangle inequality of the distance function in $G$, we have that $d_G(x,y)=k-1$. Now, if $\omega(x,y) \omega(x,y) \omega(y,z)\geq 0$, the closed walk starting at $x$ and passing through $y$ and $z$ using paths corresponding to $\omega(x,y)$, $\omega(x,y)$, and $\omega(y,z)$ is an unbalanced walk of length $2k-2$, which must contain an unbalanced cycle of length at most $2k-2$. This contradicts our assumption on the unbalanced-girth of $(G, \Sigma)$.

  We can now build a homomorphism $\vphi$ from $(H,\omega)$ to
  $(\widetilde{B},ad_{(B, \Pi)})$. We shall build this homomorphism
  iteratively by finding an appropriate image for each vertex in the
  $2$-tree ordering of $V(H)$. The weighted graph $(H_3,\omega)$ is a
  triangle, thus the triple of its edge-weights is a $k$-good
  triple. By Lemma~\ref{k-goodImpliesAll(pqr)}, there is an equivalent
  triangle in $(\widetilde{B},ad_{(B, \Pi)})$. We may resign at
  $v_1,v_2$ or $v_3$, if needed, and map those vertices to the adequate triangle
  (we recall that a homomorphism allows resigning in the source
  graph). We may repeat this process. For $i \geq 3$, vertex $v_{i+1}$
  has two neighbors in the graph $H_{i+1}$. Those two neighbors (say
  $x$ and $y$) are already mapped to $(\widetilde{B},ad_{(B,
    \Pi)})$. Now, the three values of $\omega$ on the triangle defined
  by $x,y$ and $v_{i+1}$ form a $k$-good triple. Since
  $(\widetilde{B},ad_{(B, \Pi)})$ has the all $k$-good property, there
  is some vertex that makes an equivalent triangle with $\vphi(x)$ and
  $\vphi(y)$. If needed, we may resign around $v_{i+1}$ in $H$ in
  order to match the exact values on this triangle. In the end, we
  obtain a homomorphism from $(H,\omega)$ to $(\widetilde{B},ad_{(B,
    \Pi)})$. The edges of $G$ are exactly those edges of $H$ which are
  of weight $+1$ or $-1$. Since $\vphi$ assigns these edges to edges of
  $(\widetilde{B},ad_{(B, \Pi)})$ with weight $+1$ or $-1$, it means
  that $\vphi$ is also a homomorphism of $(G, \Sigma)$ to $(B,
  \Pi)$. This concludes the proof of sufficiency.

  \bigskip

  We now prove the necessary part of the statement. First, we assume that $(B, \Pi)$ is a minimal signed bipartite graph of unbalanced-girth $2k$ bounding the class $\SBSPG{2k}$. Our aim is to build a $k$-partial signed distance graph of $(B, \Pi)$ with the all $k$-good property.

  Let $\mathcal C$ be the class of $k$-partial signed distance graphs
  $(\widetilde{G},ad_{(G,\Sigma)})$
  satisfying:\\
  (i) $(G, \Sigma)\in \SBSPG{2k}$;\\
  (ii) $\widetilde{G}$ is a partial $2$-tree;\\
  (iii) $(\widetilde{G},ad_{(G,\Sigma)})$ is a $k$-partial signed distance graph of $(G,\Sigma)$;\\
  (iv) for every edge $uv$ of $\widetilde{G}$, $u$ and $v$ lies on a common unbalanced cycle of
  length $2k$ of $(G, \Sigma)$.
  \medskip

  It is clear that $\mathcal C$ is nonempty, for example if $G$ is an unbalanced cycle of length $2k$, then for any $\widetilde{G}$ which contains $G$ as a spanning subgraph we have $\widetilde{G}\in \mathcal{C}$. Let us emphasize that $\widetilde{G}$ does not necessarily contain $G$ as a subgraph, in fact if $G$ contains an edge $e$ which is in no unbalanced cycle of length $2k$, then $e$ cannot be an edge of $\widetilde{G}$.

  Our aim is to show that if $B^*$ is minimal such that $(B^*,ad_{(B, \Pi)})$ bounds $\mathcal C$, then $(B^*,ad_{(B, \Pi)})$ has the all $k$-good property. We first need to show that such a $B^*$ exists. To this end, we show that $(K_{|V(B)|},ad_{(B, \Pi)})$ is a bound for $\mathcal C$.

  Indeed, since $B$ bounds $\SBSPG{2k}$, there exists a homomorphism $f:(G,\Sigma) \to (B,\Pi)$. If necessary, by change of notation, we may assume that $f$ is indeed a homomorphism with respect to $\Sigma$. We claim that $f$ is also a homomorphism of $(\widetilde{G},ad_{(G,\Sigma)})$ to $(K_{|V(B)|},ad_{(B,\Pi)})$ which preserves weights. Clearly, $f$ preserves edges of weight $+1$ or $-1$. Now, let $uv$ be an edge of weight $p$ in $(\widetilde{G}, ad_{(G,\Sigma)})$, $p\geq 2$. By Property~(iv), $u$ and $v$ lie on a common unbalanced cycle of length $2k$ of $(G, \Sigma)$. By Observation~\ref{obs:DistanceDetmByCycle}, we have $ad_{(G,\Sigma)}(u,v)=ad_{(B,\Pi)}(f(u),f(v))$, that is, $uv$ is mapped to an edge of weight $p$, which shows the claim for $f$.

  Now, consider a minimal graph $B^*$ with $B\subseteq B^*$ such that $(B^*,ad_{(B, \Pi)})$ bounds the class $\mathcal C$. By Property~(iv), every edge of a weighted graph in $\mathcal C$ has weight at most~$k$, therefore, this is also the case for $B^*$. In other words, $(B^*,ad_{(B, \Pi)})$ is a $k$-partial signed distance graph of $(B, \Pi)$. We will show that $(B^*,ad_{(B,\Pi)})$ has the all $k$-good property.

  Clearly, we have $E(B^*)\neq\emptyset$. Therefore, assume by contradiction that for some edge $xy$ with $ad_{(B,\Pi)}(x,y)=p$ and some $k$-good triple $\{p,q,r\}$, there is neither a vertex $z$ in $B^*$ with $xz,yz\in E(B^*)$, $ad_{(B, \Pi)}(x,z)=q$ and $ad_{(B, \Pi)}(y,z)=r$, nor a vertex $z'$ with $xz,yz\in E(B^*)$, $ad_{(B, \Pi)}(x,z)=-q$ and $ad_{(B, \Pi)}(y,z)=-r$. By minimality of $B^*$, there exists a weighted signed graph $(\widetilde{G_{xy}},d_{G_{xy}})$ of $\mathcal C$ such that for any homomorphism $f$ of $(\widetilde{G_{xy}},d_{G_{xy}})$ to $(B^*,ad_{(B, \Pi)})$, there is an edge $ab$ of $\widetilde{G_{xy}}$ of weight $p$, such that $f(a)=x$ and $f(b)=y$.

  We now build a new weighted signed graph from $(\widetilde{G_{xy}},d_{G_{xy}})$ as follows. Let $\widehat{T}$ be a $2$-tree completion of the graph $T=T_{2k}(p,q,r)$ where $\widehat{T}$ contains the triangle $uvw$; edges of this triangle have weights $p$, $q$ and $r$ in $(\widehat{T},d_T)$. Then, for each edge $ab$ of $\widetilde{G_{xy}}$ with $d_{G_{xy}}(a,b)=p$, we add a distinct copy of $(\widehat{T},d_T)$ to $(\widetilde{G_{xy}},d_{G_{xy}})$ by identifying the edge $ab$ with the edge $uv$ of $(\widehat{T},d_T)$ (observe that both have weight $p$). It is clear that the resulting weighted signed graph, that we call $(\widehat{G'_{xy}},d_{G'_{xy}})$, belongs to the class $\mathcal C$. Moreover, $(\widetilde{G_{xy}},d_{G_{xy}})$ is a subgraph of $(\widehat{G'_{xy}},d_{G'_{xy}})$ (indeed the new vertices added in the construction have not altered the distances between original vertices of $G_{xy}$). Thus, there exists a homomorphism $\phi$ of $(\widehat{G'_{xy}},d_{G'_{xy}})$ to $(B^*,ad_{(B,\Pi)})$, whose restriction to the subgraph $(\widetilde{G_{xy}},d_{G_{xy}})$ is also a homomorphism. Therefore, by the choice of $G_{xy}$, at least one pair $a,b$ of vertices of $\widehat{G'_{xy}}$ with $d_{G'_{xy}}(a,b)=p$ is mapped by $\phi$ to $x$ and $y$, respectively. But then, the copy of $\widehat{T_{2k+1}}(p,q,r)$ added to $G$ for this pair forces the existence of the desired triangle on the edge $xy$ of $B^*$, which is a contradiction and completes the argument.

  Finally, to complete the proof, suppose that $(B,\Pi)$ is a signed bipartite graph of unbalanced-girth $2k$ which bounds $\SBSPG{2k}$ but is not minimal. Then, there exists a minimal subgraph $(B', \Pi')$ (where $\Pi'$ is induced by $\Pi$) which bounds $\SBSPG{2k}$. Let $(\widetilde{B'},ad_{(B', \Pi')})$ be the $k$-partial signed distance graph of $(B', \Pi')$ which satisfies the theorem for $(B', \Pi')$. Thus, each edge of $\widetilde{B'}$ is in an unbalanced $2k$-cycle of $(B', \Pi')$ and, therefore, of $(B, \Pi)$. Hence, by Observation~\ref{obs:DistanceDetmByCycle}, $ad_{(B', \Pi')}$ is the restriction of $ad_{(B, \Pi)}$ to the edges of $\widetilde{B'}$ which means that $(\widetilde{B'},ad_{(B', \Pi')})$ is a $k$-partial signed distance graph of $(B, \Pi)$ and thus satisfies the theorem for $(B, \Pi)$ as well.
\end{proof}

\section{Algorithms}\label{sec:algo}

We now show three algorithmic applications of
Theorem~\ref{thm:MinimalBound}. The first one addresses the main
question of deciding whether a given signed bipartite graph of
unbalanced-girth $2k$ bounds $\SBSPG{2k}$. We give a polynomial-time
algorithm for this. Then, we discuss the possible YES and NO cases of
this question. For the NO case we show how a certificate can be
built. For $(B, \Pi)$ being a YES case, we give a polynomial-time
algorithm which does the following: for an input $(G, \Sigma)$ which
is a signed bipartite $K_4$-minor-free graph of unbalanced-girth at
least $2k$, our algorithm builds a homomorphism of $(G, \Sigma)$ to
$(B, \Pi)$.

Before going into further details, we would like to point out that
given a general signed graph $(B, \Pi)$, it can be checked in
polynomial time whether it is a signed bipartite graph of
unbalanced-girth $2k$. To start with, one must decide if $B$ is
bipartite. While many efficient methods can be employed, we consider
the following simple method which has a slightly stronger
conclusion. Given some vertex $x$, we form the set $N_i(x)$ of
vertices at distance $i$ from $x$ (by induction on $i$). At step $i$,
$i\leq n-1$, we check if the set $N_i(x)$ induces a stable set. If
for some $i$ this is not the case, then there exists a closed walk of
length $2i+1$ starting from $x$. This walk is not necessarily a cycle,
as it could consist of a path connecting $x$ to an odd-cycle. However,
the smallest of all such walks, calculated over all vertices of $G$,
corresponds to the length of the shortest odd-cycle of $B$.
Furthermore, $B$ is bipartite if no vertex $x$ is in an odd-cycle.

We can extend this algorithm to decide the unbalanced-girth of $(B,
\Pi)$ where $B$ is a bipartite graph. For a given vertex $x$ of $B$
and for $i\leq n-1$, we build two sets $N_i^+(x)$ and $N_i^-(x)$:
$N_i^+(x)$ (respectively, $N_i^-(x)$) is the set of vertices at
distance $i$ from $x$ with a path of length $i$ from $x$ whose product
of signs is positive (respectively, negative). These two sets can be
built by induction on $i$. If for some $i$ we have $N_i^+(x) \cap
N_i^-(x)\neq \emptyset$, then there exists an unbalanced closed walk
of length $2i$ containing $x$. The unbalanced-girth of $(B, \Pi)$ is
then simply the minimum of the lengths of all such unbalanced walks
taken over all vertices, noting that a minimum unbalanced walk must
necessarily be an unbalanced cycle.

Thus, having verified that the input graph $(B,\Pi)$ is a bipartite
signed graph of unbalanced-girth $2k$, the following algorithm decides
if $(B,\Pi)$ bounds the class $\SBSPG{2k}$.

\begin{algorithm}{Deciding whether a signed bipartite graph of unbalanced-girth $2k$ bounds $\SBSPG{2k}$.}
	\begin{algorithmic}[1]
		\Require{An integer $k$, a signed bipartite graph $(B, \Pi)$ of unbalanced-girth $2k$.}
		\Statex
		\State Compute the set $\mathcal T_k$ of $k$-good triples.
		\State Compute the algebraic distance function $ad_{(B, \Pi)}$ of $(B, \Pi)$.\label{alg:distances}

		%
		\State Let $(\widetilde{B},ad_{(B,\Pi)})$ be the $k$-partial signed distance graph of $B$ obtained from the
		complete signed distance graph of $B$ by removing all edges of weight more than $k$ or less than $-k$.
		\State If $E(\widetilde{B})=\emptyset$ return NO.\label{alg:EdgeSet}
		\For{$e=xy$ in $E(\widetilde{B})$}\label{alg:loop}
		\Let{$p$}{$ad_{(B, \Pi)}(xy)$}
		\For{each $k$-good triple $\{p,q,r\}\in\mathcal T_k$ containing $p$}
		\If{there is no pair $z,z'$ of $V(B)$ with either $$ad_{(B, \Pi)}(xz)=ad_{(B, \Pi)}(yz')=q \quad \text{and} \quad ad_{(B,\Pi)}(yz)=ad_{(B,\Pi)}(xz')=r$$ or $$ad_{(B, \Pi)}(xz)=ad_{(B, \Pi)}(yz')=-q \quad \text{and} \quad ad_{(B,\Pi)}(yz)=ad_{(B,\Pi)}(xz')=-r$$}
		\label{alg:check_edge}
		\Statex\Comment{\emph{(the edge $e$ fails for the all $k$-good property)}}

		\Let{$\widetilde{B}$}{$(\widetilde{B}-uv)$}\label{alg:update}
		\State Go to Step~\ref{alg:EdgeSet} (restart the loop).
		\EndIf
		\EndFor
		\EndFor

		\State\Return YES\Comment{\emph{($\widetilde{B}$ is a certificate)}}

	\end{algorithmic}
	\label{algo:YES-NO}
\end{algorithm}

In the next theorem, we show that the algorithm is correct and we discuss its complexity.

\begin{theorem}
	Algorithm~\ref{algo:YES-NO} checks in time $O(m^2n)=O(n^5)$
    whether a given signed bipartite graph $(B, \Pi)$ of
    unbalanced-girth $2k$ on $n$ vertices and $m$ edges bounds
    $\SBSPG{2k}$.
\end{theorem}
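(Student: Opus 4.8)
The plan is to establish correctness by showing that Algorithm~\ref{algo:YES-NO} precisely tests the condition of Theorem~\ref{thm:MinimalBound}, and then to bound its running time by counting the nested loops and the cost of the distance computation.

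\paragraph{Correctness}
First I would argue that the algorithm returns YES if and only if $(B,\Pi)$ bounds $\SBSPG{2k}$. By Theorem~\ref{thm:MinimalBound}, this amounts to showing that the algorithm returns YES exactly when some nonempty $k$-partial signed distance graph of $(B,\Pi)$ has the all $k$-good property. The key observation is that the algorithm maintains a current candidate $\widetilde{B}$, initialized (after Step~\ref{alg:distances}) as the \emph{maximal} $k$-partial signed distance graph, namely the complete signed distance graph with all edges of absolute weight exceeding $k$ deleted. The main loop repeatedly scans the edges of $\widetilde{B}$; whenever an edge $e=xy$ fails the all $k$-good condition for some $k$-good triple $\{p,q,r\}$ containing $p=ad_{(B,\Pi)}(xy)$ (that is, no witnessing vertex $z$ or $z'$ exists in $(B,\Pi)$), the edge is deleted and the scan restarts. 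I would then prove the central invariant: \emph{deleting a failing edge never destroys an all $k$-good $k$-partial signed distance graph that survives within $\widetilde{B}$}. Concretely, if $(\widetilde{B}',ad_{(B,\Pi)})$ is any subgraph of the current $\widetilde{B}$ having the all $k$-good property, then a failing edge $e$ of $\widetilde{B}$ cannot belong to $\widetilde{B}'$, since the witnessing vertices quantify over all of $V(B)$ and hence their absence in $(B,\Pi)$ is inherited by $\widetilde{B}'$. Thus no achievable solution is ever lost, and upon termination the surviving $\widetilde{B}$ is either empty (in which case, by this invariant, no nonempty all $k$-good $k$-partial signed distance graph exists and the answer is genuinely NO) or every remaining edge passes the test for every relevant $k$-good triple, so $\widetilde{B}$ itself has the all $k$-good property and serves as the promised certificate.

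\paragraph{Complexity}
For the running time I would account for each step in turn. Computing $\mathcal{T}_k$ and $ad_{(B,\Pi)}$ (Steps~1--\ref{alg:distances}) can be done within the claimed bound: the algebraic distances between all pairs follow from the breadth-first layering with signed layers $N_i^+(x),N_i^-(x)$ described just before the algorithm, run from each of the $n$ vertices, which costs $O(n(n+m))=O(nm)$. The dominant cost is the main double loop (Steps~\ref{alg:loop} onward). Each full pass scans $O(m)$ edges; for each edge and each of the $O(1)$ relevant triples, the search for a witnessing pair $z,z'$ examines $O(n)$ candidate vertices using the precomputed distance table, so one complete pass costs $O(mn)$. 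Since each failing test deletes one edge of $\widetilde{B}$ and restarts the scan, there are at most $m$ restarts, giving $O(m^2 n)$ overall; as $m=O(n^2)$ for the relevant weighted graph, this is $O(n^5)$.

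\paragraph{Main obstacle}
The step I expect to require the most care is the correctness invariant, specifically the monotonicity argument that justifies the greedy deletion and restart. The subtlety is that removing an edge could in principle cause a previously-passing edge to fail, because a witnessing vertex $z$ for another edge might have relied on the deleted edge being present; this is exactly why the algorithm restarts the scan after every deletion rather than completing a single pass. I would therefore take care to phrase the invariant in terms of an optimal (maximal all $k$-good) substructure that is preserved, rather than claiming any individual edge's status is stable, and to verify that the witness search quantifies over $V(B)$ independently of the current edge set of $\widetilde{B}$ — so that a failing edge is failing \emph{intrinsically} and its deletion is always safe. Establishing this cleanly is what makes the termination-with-correct-answer argument go through.
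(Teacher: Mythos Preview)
Your proposal follows essentially the same route as the paper: correctness is reduced to Theorem~\ref{thm:MinimalBound} via the invariant that any nonempty all $k$-good $k$-partial signed distance subgraph of $(B,\Pi)$ survives every deletion, and the running time is obtained by counting the nested loops. The paper phrases the survival argument slightly differently---rather than stating the invariant for an arbitrary all $k$-good substructure, it singles out the specific $\widetilde{B}$ produced in the proof of Theorem~\ref{thm:MinimalBound} (whose edges all lie on unbalanced $2k$-cycles of $B$, so that by Observation~\ref{obs:DistanceDetmByCycle} their weights are determined already inside $\widetilde{B}$) and argues directly that none of those edges can ever fail the test. Your formulation and the paper's are equivalent and yield the same conclusion.

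One point in your complexity argument needs repair: the number of $k$-good triples containing a fixed value $p$ is not $O(1)$ but $\Theta(k^2)$, and $k$ may be as large as $n/2$, so your ``$O(1)$ relevant triples, $O(n)$ witnesses each'' count does not give $O(n)$ per edge as stated. The paper organises the per-edge check the other way round: rather than iterating over triples and searching for a witness, it iterates over the at most $n-2$ candidate third vertices $z$, reads off the pair $(ad_{(B,\Pi)}(xz),ad_{(B,\Pi)}(yz))$, and records which triples are realised; with the set $\mathcal{T}_k$ precomputed in Step~1, deciding whether every required triple is covered then costs $O(n)$ per edge. Reorganising your inner loop this way recovers the $O(mn)$ per pass and the claimed $O(m^2n)=O(n^5)$ bound.
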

\begin{proof}
	First let us prove that Algorithm~\ref{algo:YES-NO} is
    correct, that is, it returns ``YES'' if and only if $(B, \Pi)$
    is a bound for $\SBSPG{2k}$.

    \begin{itemize}
    \item Assume that $(B, \Pi)$ is a bound. Then, by
        Theorem~\ref{thm:MinimalBound}, there is a $k$-partial
        signed distance graph $(\widetilde{B},ad_{(B, \Pi)})$ of
        $(B,\Pi)$ which has the all $k$-good triple
        property. Furthermore, as mentioned in the proof of the
        theorem, a minimal bound $\widetilde{B}$ can be chosen so that
        each of its edges belongs to an unbalanced $2k$-cycle of
        $B$. Therefore, by Observation~\ref{obs:DistanceDetmByCycle},
        the weight function $ad_{(B, \Pi)}$ restricted to the edges of
        $\widetilde{B}$ would remain the same if the distances are
        calculated in the subgraph of $B$ induced by the edges of
        weight $+1$ or $-1$ of $\widetilde{B}$, or if calculated in an
        intermediate subgraph.  Therefore, no edge of
        $(\widetilde{B},ad_{(B, \Pi)})$ will be deleted in
        Step~\ref{alg:loop} of the algorithm and, by finiteness, YES
        will be the output.

      \item Conversely, if $(B,\Pi)$ is not a bound, then by
        Theorem~\ref{thm:MinimalBound}, no subgraph
        $(\widetilde{B},ad_{(B, \Pi)})$ has the all $k$-good
        property. That is, as long as there is an edge, some edge will
        fail the check of Step~\ref{alg:loop} and finally NO will be
        the output.
	\end{itemize}
	For the running time of the algorithm, having picked an edge
        $e$, one must consider at most $n-2$ triangles containing $e$
        in order to decide if $e$ is passing the test. In each call of
        Step~\ref{alg:loop}, one may check at most $m$ edges thus a
        total of $O(mn)$ checks before recalling the loop. Finally,
        the loop may be recalled at most the total number of edges of
        the complete graph on $V(B)$, thus total of $O(m^2n)$ steps.
\end{proof}

\subsection{Certificate of a NO answer}

Assuming that the algorithm returns NO, one can backtrack it to build a signed bipartite $K_4$-minor-free graph of unbalanced-girth $2k$ which does not map to $(B, \Pi)$. Let $e_1, e_2 \ldots, e_j$ be the edges whose weights were between $-k+1$ and $k$ in the order that they are deleted in Step~\ref{alg:loop}. Suppose $e_i$ is of weight $p_i$ and let ${p_i,q_i,r_i}$ be (one of) the $k$-good triples which did not exist on $e_i$. Starting from the end, we first build a weighted $K_4$-minor-free graph as follows. The starting graph is the weighted triangle of weights $p_j, q_j, r_j$. At step $i$, having already built the graph $G_{i+1}$, we build $G_i$ as follows. For every edge of weight $p_i$ of $G_{i+1}$, build two new triangles on it, a $(p_i,q_i, r_i)$-triangle and a $(p_i,r_i, q_i)$-triangle.
We build the signed bipartite graph $G$ from $G_1$ as follows: an  edge $xy$ of weight $l$ is replaced by two paths of lengths $l$ and $2k-l$. If $l$ is negative, we assign a negative sign to an edge of the path of length $l$, otherwise we assign a negative sign to the path of length $2k-l$. Using Observation~\ref{obs:HomDistanceDetmByCycle}, one can easily verify that mapping $G$ to $B$ would imply a mapping of the weighted graph $G_1$ to $(K_{V(B)},ad_{(B, \Pi)})$, which contradicts the removal of $e_1$ at the first run of Step~\ref{alg:loop}.

We note that this construction may end up having an order which is exponential in the order of $B$. Given a signed bipartite graph of unbalanced-girth $2k$ for which Algorithm~\ref{algo:YES-NO} has returned NO, we do not know if there always exists a signed bipartite $K_4$-minor-free graph of unbalanced-girth $2k$ which does not map to $B$ and whose order is polynomial in the order of $B$.

\subsection{$(B, \Pi)$-coloring algorithm for YES instances}

Assuming that a signed bipartite graph $(B,\Pi)$ of unbalanced-girth $2k$ is a YES instance of our algorithm, the next step is to give a polynomial-time algorithm which, for an input $(G,\Sigma)$ (a signed bipartite $K_4$-minor-free graph of unbalanced-girth at least $2k$) gives a homomorphism to $(B,\Pi)$. The next algorithm does exactly this using the weighted graph $(\widetilde{B},ad_{(B,\Pi)})$ which is produced by Algorithm~\ref{algo:YES-NO}.

\begin{algorithm}{Finding a homomorphism of $(G,\Sigma)$ to $(H,\Pi)$.}

	\begin{algorithmic}[1]
		\Require{$(\widetilde{B},ad_{(B,\Pi)})$, the YES certificate of $(B,\Pi)$ and a signed bipartite $K_4$-minor-free graph $(G,\Sigma)$.}
		\Statex
		\State Complete $G$ to a $2$-tree extension $G'$.
		Compute a $2$-tree ordering $v_1, v_2, \ldots, v_n$ of $G'$.
		\State Compute the signed distance function $ad_{(G, \Sigma)}$ for all edges of $G'$. For edges of weight more than $k$ or less than $-k+1$, replace the weight with $k$.

		\State Find a triangle in $(\widetilde{B},ad_{(B,\Pi)})$ whose edges are of the same weights as the edges of the triangle induced by $v_1$, $v_2$ and $v_3$ (possibly after a resigning in one of $v_1$, $v_2$, $v_3$).
		Apply the resigning; if it was needed in finding the triangle, to $(G', ad_{(G, \Sigma)})$. Then, map $v_1$, $v_2$, $v_3$ by $\phi$ to the vertices of the found triangle to preserve the algebraic distances.

		\For{ $i=4, \ldots n$}
                \State Let $x_i$ and $y_i$ be the two neighbors of $v_i$ in $v_1, v_2, \ldots v_i$.
                \State In $(\widetilde{B},ad_{(B,\Pi)})$, find a vertex $z$ whose algebraic distances from $\phi(x_i)$ and $\phi(y_i)$ is either the same as the distances of $v_i$ from $x_i$ and $y_i$ (respectively) or it is their opposites.
                \State Let $\phi(v_i)=z$.
		\State In the latter case, resign $(G', ad_{(G, \Sigma)})$ at vertex $v_i$.
		\label{alg:mapping}

		\EndFor

	\end{algorithmic}
	\label{algo:FindingHom}
\end{algorithm}

\section{Applications}\label{sec:application}

We now apply the results of the previous sections.

\subsection{Application to projective cubes}

An immediate corollary of Theorem~\ref{thm:SPC all k-good} and Theorem~\ref{thm:MinimalBound} is the following.

\begin{theorem}\label{thm:BoundingSPGbySPC}
Every signed bipartite $K_4$-minor-free graph of unbalanced-girth at least $2k$ admits a homomorphism to $SPC(2k-1)$.
\end{theorem}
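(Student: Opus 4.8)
The plan is to derive Theorem~\ref{thm:BoundingSPGbySPC} by combining the two main ingredients already established. By Theorem~\ref{thm:SPC all k-good}, the complete signed distance graph of $SPC(2k-1)$ satisfies the all $k$-good property. By Theorem~\ref{thm:MinimalBound}, a signed bipartite graph $(B,\Pi)$ of unbalanced-girth $2k$ bounds $\SBSPG{2k}$ precisely when it admits some nonempty $k$-partial signed distance graph with the all $k$-good property. So the whole statement reduces to verifying that $SPC(2k-1)$ meets the hypotheses of Theorem~\ref{thm:MinimalBound}.

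First I would record that $SPC(2k-1)$ is a signed bipartite graph of unbalanced-girth exactly $2k$: this is precisely the content of Theorem~\ref{thm:SPC(d)} applied with $d=2k-1$, together with the remark in the excerpt that $PC(2k-1)$ is bipartite and that the unbalanced-girth of $SPC(2k-1)$ equals $2k$. Thus $SPC(2k-1)\in\SBSPG{2k}$-type objects can legitimately serve as the target $(B,\Pi)$ in Theorem~\ref{thm:MinimalBound}.

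Next I would produce the required $k$-partial signed distance graph. Take the \emph{complete} signed distance graph $(K_n,ad_{SPC(2k-1)})$ of $SPC(2k-1)$ on $n=2^{2k-1}$ vertices. By the third bullet of Theorem~\ref{thm:SPC(d)}, every pair of vertices lies on a common unbalanced cycle of length $2k$, so every algebraic distance has absolute value at most $k$; hence the complete signed distance graph is itself already a $k$-partial signed distance graph (no edges need be discarded). This is exactly the object shown to have the all $k$-good property in Theorem~\ref{thm:SPC all k-good}, and it is nonempty since $k\geq 2$ forces $SPC(2k-1)$ to have edges.

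Feeding this into the sufficiency direction of Theorem~\ref{thm:MinimalBound} immediately yields that $SPC(2k-1)$ bounds $\SBSPG{2k}$, i.e.\ every signed bipartite $K_4$-minor-free graph of unbalanced-girth at least $2k$ admits a homomorphism to $SPC(2k-1)$, which is the claim. I do not anticipate a genuine obstacle here, since both heavy lifts are already done: the only point requiring a little care is confirming that the complete signed distance graph qualifies as a $k$-partial signed distance graph (the bound $|ad|\le k$), which follows from the length-$2k$ common-cycle property, and noting that Theorem~\ref{thm:SPC all k-good} indeed requires $k\geq 2$, matching the hypothesis implicit in the statement. The argument is thus a short corollary rather than a new proof.
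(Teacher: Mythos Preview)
Your proposal is correct and follows exactly the paper's approach: the paper presents this theorem as an immediate corollary of Theorem~\ref{thm:SPC all k-good} and Theorem~\ref{thm:MinimalBound}, and your write-up simply fills in the routine verifications (bipartiteness and unbalanced-girth of $SPC(2k-1)$ from Theorem~\ref{thm:SPC(d)}, and the $|ad|\le k$ bound from the common-$2k$-cycle property) needed to apply those two results.
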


\subsection{Application to edge-coloring}

Let $G$ be a $K_4$-minor-free $2k$-regular multigraph such that for any set $X$ of an odd number of vertices, at least $2k$ edges are incident with exactly one vertex of $X$. It is easily observed that this is a necessary condition for $G$ to have edge-chromatic number equal to $2k$. Graphs satisfying this condition are called $2k$-graphs.

In~\cite{S90}, Seymour gave a formula for calculating the edge-chromatic number of all $K_4$-minor-free multigraphs, which implies that for the case of $2k$-regular multigraphs with no $K_4$-minor, this necessary condition (to be $2k$-edge-colorable) is also sufficient. As an application of our work, we give a simple proof of this special case.

\begin{theorem}\label{thm:edge-coloring}
Every $K_4$-minor-free $2k$-graph is $2k$-edge-colorable.
\end{theorem}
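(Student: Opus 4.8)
The plan is to connect edge-colorings of a $2k$-regular multigraph to signed graph homomorphisms, so that Theorem~\ref{thm:BoundingSPGbySPC} can be applied. The key observation should be that a proper $2k$-edge-coloring of $G$ is equivalent to a homomorphism to $K_{2k}$, but more usefully, that $2k$-edge-colorings relate to homomorphisms into a structured target built from $SPC(2k-1)$. Concretely, I would first recall that the $2k$ colors can be identified with the $2k-1$ standard basis vectors $e_1, \ldots, e_{2k-1}$ together with the all-$1$ vector $J$ of $\mathbb{Z}_2^{2k-1}$, which are exactly the connection set $S$ of $PC(2k-1)$. A proper edge-coloring assigns to each edge a color in $S$, and at each vertex the $2k$ incident edges receive the $2k$ distinct elements of $S$.

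The main step is to translate the edge-coloring problem into a homomorphism question about a signed bipartite graph derived from $G$. I would construct an auxiliary signed bipartite graph, roughly by subdividing or by taking an incidence-type construction: the standard trick is that a $2k$-edge-coloring of $G$ corresponds to a homomorphism of a signed graph (built from $G$) to $SPC(2k-1)$, using the property from Theorem~\ref{thm:SPC(d)} that at each vertex the edges of a closed walk, summed in $\mathbb{Z}_2^{2k-1}$, must respect the balance. The essential point is that the sum of all $2k$ elements of $S$ is zero in $\mathbb{Z}_2^{2k-1}$ (since $e_1 + \cdots + e_{2k-1} = J$), which matches the Eulerian/parity structure forced by $2k$-regularity. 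First I would verify that the auxiliary signed graph is bipartite with unbalanced-girth at least $2k$, using precisely the $2k$-graph condition: the requirement that every odd-order vertex set $X$ sends out at least $2k$ edges is exactly what guarantees the unbalanced-girth bound, because a short unbalanced cycle would correspond to a small odd cut. Second, I would check that the auxiliary graph is $K_4$-minor-free whenever $G$ is, since the construction should preserve the partial $2$-tree structure.

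Once these structural properties are in place, I would apply Theorem~\ref{thm:BoundingSPGbySPC} to obtain a homomorphism of the auxiliary signed bipartite graph to $SPC(2k-1)$, and then read off the desired $2k$-edge-coloring of $G$ from this homomorphism by recovering, for each edge of $G$, the element of $S$ to which it is assigned.

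\textbf{The hard part} will be setting up the auxiliary construction correctly so that the three required properties (bipartiteness with unbalanced-girth $2k$, $K_4$-minor-freeness, and the exact correspondence between homomorphisms and $2k$-edge-colorings) all hold simultaneously, and in particular proving that the $2k$-graph hypothesis on odd cuts translates precisely into the unbalanced-girth condition. This equivalence between small odd edge-cuts and short unbalanced cycles in the signed setting is the crux: I expect it to rely on planarity/duality intuition (odd cuts in $G$ becoming unbalanced cycles in a planar dual of the signed construction), and making this rigorous for the $K_4$-minor-free class, rather than merely for planar graphs, is where the care is needed.
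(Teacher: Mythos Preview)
Your high-level strategy---reduce to Theorem~\ref{thm:BoundingSPGbySPC} via an auxiliary signed bipartite graph---matches the paper's, and you correctly anticipate that the $2k$-graph hypothesis (every odd cut has at least $2k$ edges) is exactly what should force the unbalanced-girth to be at least $2k$. But two concrete ingredients are missing, and without them the plan does not go through.

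First, the auxiliary graph is not a subdivision or incidence construction: it is the \emph{planar dual} $G^{D}$ of $G$. Since $G$ is $K_4$-minor-free it is planar; fix a plane embedding and form $G^{D}$. Because $G$ is $2k$-regular, every face of $G^{D}$ has length $2k$, and in particular $G^{D}$ is bipartite. Cycles in $G^{D}$ correspond to edge-cuts in $G$, so the odd-cut condition on $G$ becomes an unbalanced-girth condition on $G^{D}$---just as you guessed, but this only works once you commit to the dual rather than an incidence graph. Your worry about preserving $K_4$-minor-freeness is then handled by the classical fact that the class of series-parallel multigraphs is closed under planar duality.

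Second, and more seriously, you have no mechanism for choosing the \emph{signature} on the auxiliary graph, and this is what makes ``unbalanced'' mean the right thing. The paper first finds a perfect matching $M$ of $G$ (the $2k$-graph condition ensures Tutte's criterion is met) and declares exactly the dual edges corresponding to $M$ to be negative. Then every facial $2k$-cycle of $G^{D}$ contains exactly one negative edge and is therefore unbalanced, while the odd-cut hypothesis yields unbalanced-girth exactly $2k$. After applying Theorem~\ref{thm:BoundingSPGbySPC}, each facial $2k$-cycle is mapped onto an unbalanced $2k$-cycle of $SPC(2k-1)$, whose edges are in bijection with $\{e_1,\dots,e_{2k-1},J\}$ by Theorem~\ref{thm:SPC(d)}; pulling these labels back to the edges of $G$ gives the proper $2k$-edge-coloring. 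Without the perfect matching you have no signature, hence no unbalanced cycles to speak of, and the bridge to Theorem~\ref{thm:BoundingSPGbySPC} collapses.
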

\begin{proof}
	Let $G$ be a $K_4$-minor-free $2k$-graph. Thus, $G$ is a planar multigraph and we will consider it as a plane multigraph, that is, with a fixed planar drawing, and thus we can speak of the dual of $G$.
	One can easily check that Tutte's theorem can be applied, and thus $G$ has a perfect matching. Let $M$ be such a matching. Let $G^{D}$ be the dual of $G$ and assign a negative sign to all edges corresponding to $M$, and a positive sign to all other edges. Thus $(G^{D}, M')$ is a signed graph which is of unbalanced-girth $2k$ (this correspond to the fact that $G$ is a $2k$-graph). Furthermore, as $M'$ corresponds to the matching $M$, every facial cycle of $(G^{D}, M')$ is an unbalanced cycle of length $2k$.

	Thus, by Theorem~\ref{thm:BoundingSPGbySPC}, there exists a homomorphism $\psi$ of $(G^{D}, M')$ to $SPC(2k-1)$. In this mapping, all facial cycles are mapped to unbalanced cycles of length $2k$. Edges of any such cycle are in a one-to-one correspondence with $\{e_1,e_2, \ldots, e_{2k-1}, J\}$. Re-assigning them to the edges of $G$ then yields a proper edge-coloring of $G'$.
\end{proof}

\section{Concluding remarks}\label{sec:conclu}

We conclude this paper with some discussions.

\subsection{Relation to related work}

The question of mapping a signed graph $(G, \Sigma)$ to signed projective cubes is of high importance as it captures a packing problem (we refer to~\cite{NRS13} for a proof of the following theorem).

\begin{theorem}[\cite{G05}]
Given a signed graph $(G,\Sigma)$, we have $(G,\Sigma)\to SPC(d)$ if and only if $E(G)$ can be partitioned into $d+1$ sets $\Sigma_1, \Sigma_2, \ldots, \Sigma_{d+1}$ where each $\Sigma_i$ is equivalent to $\Sigma$.
\end{theorem}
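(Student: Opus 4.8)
The plan is to unpack both directions through Theorem~\ref{thm:EquivalentDefinitionOfHom}, which reduces a homomorphism $(G,\Sigma)\to SPC(d)$ to the existence of a mapping $\vphi\colon V(G)\to\mathbb{Z}_2^d$ together with a signature $\Sigma'$ equivalent to $\Sigma$ such that every edge $uv$ satisfies $\vphi(u)-\vphi(v)\in\{e_1,\ldots,e_d,J\}$, with $uv\in\Sigma'$ exactly when $\vphi(u)-\vphi(v)=J$. The central observation I would use is that, for each coordinate $i$, the set of edges on which the $i$-th coordinate of $\vphi$ changes (equivalently, on which the $i$-th coordinate of $\vphi(u)-\vphi(v)$ equals $1$) is precisely the edge cut $\delta(\vphi_i^{-1}(1))$ determined by that coordinate. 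Combined with the fact, immediate from the definition of resigning together with Theorem~\ref{thm:Zaslavsky}, that two signatures are equivalent if and only if their symmetric difference is a cut, this links the algebra of $\mathbb{Z}_2^d$ to the cut space of $G$. I would note at the outset that it suffices to treat $G$ connected, handling the general case componentwise.

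For the forward direction, given a homomorphism I would partition $E(G)$ by recording, for each edge $uv$, which generator $\vphi(u)-\vphi(v)$ equals: let $\Sigma_i$ be the edges mapped to $e_i$ for $1\le i\le d$ and let $\Sigma_{d+1}$ be those mapped to $J$. By Theorem~\ref{thm:EquivalentDefinitionOfHom}, $\Sigma_{d+1}=\Sigma'$ is already equivalent to $\Sigma$. For each $i\le d$, the $i$-th coordinate of $\vphi(u)-\vphi(v)$ equals $1$ exactly on $\Sigma_i\cup\Sigma_{d+1}$, since $e_i$ and $J$ are the only generators with a $1$ in position $i$; hence $\Sigma_i\cup\Sigma_{d+1}$ is a cut. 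As $\Sigma_i$ and $\Sigma_{d+1}$ are disjoint, resigning $\Sigma_{d+1}$ along this cut gives $\Sigma_{d+1}\triangle(\Sigma_i\cup\Sigma_{d+1})=\Sigma_i$, so $\Sigma_i\sim\Sigma_{d+1}\sim\Sigma$, as required.

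For the converse, starting from a partition $E(G)=\Sigma_1\sqcup\cdots\sqcup\Sigma_{d+1}$ with each $\Sigma_i\sim\Sigma$, I would reverse the construction. For $1\le i\le d$, the signatures $\Sigma_i$ and $\Sigma_{d+1}$ are both equivalent to $\Sigma$, so $\Sigma_i\triangle\Sigma_{d+1}$ is a cut $\delta(X_i)$; let $\vphi_i=\mathbf{1}_{X_i}\colon V(G)\to\mathbb{Z}_2$ and set $\vphi=(\vphi_1,\ldots,\vphi_d)$. A short case check then shows that $\vphi$ is admissible: if an edge $uv$ lies in $\Sigma_j$ with $j\le d$, then $uv\in\delta(X_i)$ precisely when $i=j$, giving $\vphi(u)-\vphi(v)=e_j$; and if $uv\in\Sigma_{d+1}$, then $uv$ lies in every $\delta(X_i)$, giving $\vphi(u)-\vphi(v)=J$. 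In both cases the image is an edge of $PC(d)$, and the edges sent to $J$ are exactly those of $\Sigma_{d+1}$. Taking $\Sigma'=\Sigma_{d+1}\sim\Sigma$ in Theorem~\ref{thm:EquivalentDefinitionOfHom} then certifies that $\vphi$ is a homomorphism $(G,\Sigma)\to SPC(d)$.

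The genuinely load-bearing step is the identification of the ``coordinate sets'' $\Sigma_i\cup\Sigma_{d+1}$ (respectively $\Sigma_i\triangle\Sigma_{d+1}$) with cuts of $G$, and the resulting coset structure of equivalence classes under resigning; once that dictionary is in place, both directions amount to bookkeeping over the generators $\{e_1,\ldots,e_d,J\}$. The only subtlety I expect is ensuring the coordinate functions $\vphi_i$ are well defined: a cut of a connected graph corresponds to a vertex bipartition, so $\mathbf{1}_{X_i}$ is determined up to a global complement of $X_i$, which merely shifts $\vphi$ by a fixed vector and does not affect any difference $\vphi(u)-\vphi(v)$.
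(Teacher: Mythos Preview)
The paper does not actually prove this theorem: it states it in the concluding discussion and explicitly refers the reader to~\cite{NRS13} for a proof. So there is no in-paper argument to compare against.

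That said, your proof is correct and is essentially the standard argument one finds for this statement. The key identity you isolate---that two signatures are equivalent precisely when their symmetric difference lies in the cut space, and that for a map $\vphi\colon V(G)\to\mathbb{Z}_2^d$ the set of edges on which the $i$-th coordinate flips is the cut $\delta(\vphi_i^{-1}(1))$---is exactly what makes the correspondence work, and you use it cleanly in both directions. The verification that $\vphi(u)-\vphi(v)\in\{e_1,\ldots,e_d,J\}$ in the converse direction is complete, and your remark about the ambiguity in choosing $X_i$ (and the reduction to connected $G$) correctly disposes of the only genuine well-definedness issue.
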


Thus, in this work, we have shown that such a partition of the edges is possible for every signed bipartite $K_4$-minor-free graph of unbalanced-girth $d+1$. This is a partial support towards the following general conjecture by Guenin~\cite{G05}. (Note that in~\cite{G05}, Guenin actually makes a stronger conjecture which would imply Conjecture~\ref{conj:PlanarToPC}.)

\begin{conjecture}[\cite{G05}]\label{conj:MappingPlanarToSPC(2k-1)}\label{conj:PlanarToPC}
Every planar signed bipartite graph of unbalanced girth $2k$ admits a homomorphism to $SPC(2k-1)$.
\end{conjecture}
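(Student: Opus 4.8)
The plan is to extend the $K_4$-minor-free result, Theorem~\ref{thm:BoundingSPGbySPC}, to the full planar class. The first thing to acknowledge is that this is genuinely hard: as noted in the introduction and in~\cite{G05,NRS13}, the conjecture is a strengthening of the four-color theorem, so one should not expect a short elementary argument, and what follows is a program rather than a one-page proof. The reason the machinery of this paper does not transfer directly is that every step of the sufficiency proof of Theorem~\ref{thm:MinimalBound} relies on a $2$-tree ordering $v_1,\dots,v_n$: the homomorphism is built greedily, each new vertex $v_i$ joining over a triangle to two already-mapped neighbors, and the all $k$-good property of the target supplies a valid image for that single triangle. Planar graphs have unbounded tree-width and admit no such ordering, so the greedy extension collapses.

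The most promising route is to seek a planar analogue of Theorem~\ref{thm:MinimalBound}: a structural condition on a target $(B,\Pi)$ of unbalanced-girth $2k$ guaranteeing that it bounds all planar signed bipartite graphs of unbalanced-girth at least $2k$, and then to verify that $SPC(2k-1)$ meets it. The all $k$-good property should remain \emph{necessary}, since the graphs inserted in the necessity argument of Theorem~\ref{thm:MinimalBound} are copies of $T_{2k}(p,q,r)$, which are themselves planar; hence bounding the planar class forces the all $k$-good property just as for the $K_4$-minor-free class. It cannot, however, be \emph{sufficient} for the planar class: a purely local triangle condition that sufficed would turn the four-color theorem into a finite check. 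Thus the real content is to isolate the additional, necessarily non-local, obstruction, most naturally expressed through reducible configurations.

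Concretely, I would set up a minimal-counterexample-plus-discharging scheme in the spirit of the four-color theorem. Let $(G,\Sigma)$ be a planar signed bipartite graph of unbalanced-girth $2k$ that is minimal with respect to admitting no homomorphism to $SPC(2k-1)$. First I would assemble a list of \emph{reducible configurations}: small subgraphs whose presence lets one delete or contract them, map the resulting smaller planar graph by minimality --- using Observation~\ref{obs:HomDistanceDetmByCycle} to pin down the algebraic distances fixed by the surrounding unbalanced $2k$-cycles --- and then extend the homomorphism across the configuration by invoking the all $k$-good property of $SPC(2k-1)$ established in Theorem~\ref{thm:SPC all k-good}. Second, I would run a discharging argument, using Euler's formula together with the unbalanced-girth constraint, to show that every such $G$ contains at least one configuration from the list, contradicting minimality. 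A subtlety absent from the classical setting is that the underlying girth can be as small as $4$ while unbalanced cycles remain long, so the discharging must account for short balanced cycles and for the freedom of resigning, rather than relying on large faces alone.

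The main obstacle is exactly the gap between tree-width $2$ and planarity, and it is twofold. First, finding the correct finite set of reducible configurations: in the $K_4$-minor-free proof a single configuration --- one triangle --- suffices, whereas for planar graphs one must cover a far richer local structure, and proving reducibility of a configuration means re-solving a homomorphism-extension problem in which the algebraic-distance bookkeeping along a boundary of possibly unbounded size must stay consistent with $SPC(2k-1)$; this signed, weighted bookkeeping makes the combinatorics markedly heavier than in the classical four-color case. Second, designing a discharging scheme that certifies the list is complete is itself a four-color-theorem-strength task, and for general $k$ no such scheme is known. Even the softer idea of reducing the planar case to a bounded-tree-width case --- so that Theorem~\ref{thm:BoundingSPGbySPC} could be applied as a black box --- appears to require controlling unbalanced $2k$-cycles across $2$- and $3$-separations, which is itself open.
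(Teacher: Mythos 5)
There is nothing in the paper to compare your argument against: the statement you were given is Conjecture~\ref{conj:PlanarToPC}, an open conjecture of Guenin~\cite{G05}, and the paper neither proves it nor claims to --- its contribution is the $K_4$-minor-free case (Theorem~\ref{thm:BoundingSPGbySPC}), explicitly offered only as partial support, with the $k=2$ case of the conjecture already stronger than the four-color theorem~\cite{NRS15}. You correctly recognize this, and your submission is accordingly a research program rather than a proof. Judged as a proof attempt, the gap is therefore total: no reducible configuration is exhibited, no discharging rules are written down, and the reducibility of even one non-trivial configuration is not carried out; every hard step (the finite list, its completeness, the boundary bookkeeping of algebraic distances across a face of unbounded size) is named but not executed. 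That said, your structural observations are sound: the all $k$-good property does remain necessary for any bound on the planar class, and in fact more directly than you argue --- since $\SBSPG{2k}$ is a subclass of the planar signed bipartite graphs of unbalanced-girth at least $2k$, any planar bound also bounds $\SBSPG{2k}$, so Theorem~\ref{thm:MinimalBound} applies verbatim without needing to re-examine the planarity of $T_{2k}(p,q,r)$.

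One concrete overreach should be flagged: your assertion that the all $k$-good property \emph{cannot} be sufficient for the planar class, on the grounds that ``a purely local triangle condition that sufficed would turn the four-color theorem into a finite check,'' is not a valid argument. The four-color theorem is true, and its known proofs are themselves finite checks; a local sufficient condition would simply yield a new proof of a 4CT-strength statement, not a contradiction. Whether the condition of Theorem~\ref{thm:MinimalBound} (or any finitely checkable strengthening of it) suffices for planar targets is, as far as this paper goes, open, and your program should treat it as such rather than ruling it out. With that caveat, your diagnosis of why the paper's method stops at tree-width $2$ --- the greedy extension along a $2$-tree ordering, each vertex joining over a single triangle resolved by the all $k$-good property --- accurately reflects the sufficiency proof of Theorem~\ref{thm:MinimalBound}, and your proposed minimal-counterexample-plus-discharging scheme is a reasonable, if entirely unexecuted, direction; the paper itself points instead toward incremental extensions (e.g.\ partial $3$-trees and odd-$K_4$-minor-free signed graphs, cf.\ Conjecture~\ref{conj:BoundingBipartiteK_5free}) rather than a direct assault on the planar case.
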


The case $k=2$ of Conjecture~\ref{conj:PlanarToPC} is shown to be stronger than the four-color theorem in~\cite{NRS15}.

Regarding edge-coloring, Seymour, strongly extending Tait's reformulation of the four-color theorem, conjectured that the edge-chromatic number of any planar multigraph is determined by its maximum degree and its fractional edge-chromatic number (which was later proved to be computable in polynomial time).

\begin{conjecture}\label{conj:Seymour}
Let $G$ be a planar multigraph. Then $\chi'(G)=\max \{\Delta(G), \chi'_f(G) \}$.
\end{conjecture}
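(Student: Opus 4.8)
The plan is to reduce Seymour's conjecture to the homomorphism statement about signed planar graphs and projective cubes that underlies this paper, and then to attack that homomorphism statement directly. The inequality $\chi'(G) \ge \max\{\Delta(G), \chi'_f(G)\}$ is immediate, since both $\Delta(G)$ and the fractional relaxation $\chi'_f(G)$ are lower bounds for $\chi'(G)$; so the entire content lies in the upper bound $\chi'(G) \le \max\{\Delta(G), \chi'_f(G)\}$. First I would perform the standard regularization: setting $r = \max\{\Delta(G), \lceil \chi'_f(G)\rceil\}$, one embeds $G$ as a subgraph of a planar $r$-regular multigraph $\widehat{G}$ that still satisfies the odd-cut (density) condition, so that an $r$-edge-coloring of $\widehat{G}$ restricts to one of $G$. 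This reduces the conjecture to the clean statement that every planar $r$-regular multigraph satisfying the $r$-graph condition is $r$-edge-colorable, which is exactly the regime in which $\chi'_f = r$.

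For the even case $r = 2k$, I would then reuse the dual construction from the proof of Theorem~\ref{thm:edge-coloring}. Since $\widehat{G}$ is $2k$-regular it is Eulerian, so its planar dual $\widehat{G}^{D}$ is bipartite; Tutte's theorem supplies a perfect matching $M$, and declaring the edges dual to $M$ negative produces a signed bipartite plane graph $(\widehat{G}^{D}, M')$ every facial cycle of which is an unbalanced $2k$-cycle, so that $(\widehat{G}^{D}, M')$ has unbalanced-girth $2k$. A homomorphism of $(\widehat{G}^{D}, M')$ to $SPC(2k-1)$ would map each facial cycle to an unbalanced $2k$-cycle, whose edges biject with $\{e_1,\dots,e_{2k-1},J\}$; transporting this labelling back to $\widehat{G}$ yields the desired proper $2k$-edge-coloring. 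The only missing ingredient is precisely such a homomorphism for a \emph{planar} source, that is, Conjecture~\ref{conj:PlanarToPC}, in place of the $K_4$-minor-free version supplied by Theorem~\ref{thm:BoundingSPGbySPC}. The odd case $r = 2k+1$ would proceed analogously, through the odd-girth formulation and the non-bipartite signed projective cubes (here the dual is no longer bipartite), again conditioned on the corresponding planar homomorphism bound.

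Thus the whole difficulty is concentrated in establishing the planar analogue of Theorem~\ref{thm:BoundingSPGbySPC}. The natural route, and the one I would pursue, is to lift the machinery of Sections~\ref{sec:mainprop}--\ref{sec:mainthm} — the all $k$-good property together with Theorem~\ref{thm:SPC all k-good} — from partial $2$-trees to all planar graphs. The main obstacle is structural: the sufficiency proof of Theorem~\ref{thm:MinimalBound} relies crucially on a $2$-tree ordering, in which every new vertex is attached to an already-embedded edge so that a $k$-good triple can be realized one vertex at a time; planar graphs admit no such linear elimination ordering, and any replacement would have to build the homomorphism across a genuinely two-dimensional face structure while maintaining algebraic distances. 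I expect this to be the crux, and I do not expect it to fall to the present techniques alone: the case $k=2$ of the required planar bound is already equivalent to the four-color theorem, so any complete proof must incorporate (or re-derive) that theorem, and the recent resolution of the general Goldberg--Seymour bound $\chi'\le\max\{\Delta+1,\lceil\chi'_f\rceil\}$ does not suffice, since the planar conjecture is the strictly sharper exact equality without the $+1$ and the ceiling.
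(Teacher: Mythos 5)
The statement you were asked to prove is not proved in the paper at all: it appears as Conjecture~\ref{conj:Seymour}, an open conjecture of Seymour, and the paper only establishes restricted cases (Seymour's own result for $K_4$-minor-free multigraphs in~\cite{S90}, and Theorem~\ref{thm:edge-coloring}, which gives a new proof for $K_4$-minor-free $2k$-graphs via Theorem~\ref{thm:BoundingSPGbySPC}). Your proposal, as you yourself concede in its final paragraph, is not a proof either: it is a reduction of the conjecture to Conjecture~\ref{conj:PlanarToPC} (and its odd analogue for $r=2k+1$), which the paper explicitly records as open, and whose case $k=2$ is already stronger than the four-color theorem. So the genuine gap is the entire core of the problem: replacing the $2$-tree elimination ordering in the sufficiency half of Theorem~\ref{thm:MinimalBound} by a mechanism that works for all planar graphs. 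You correctly diagnose why the paper's machinery stops at partial $2$-trees --- the homomorphism is built one vertex at a time, each new vertex completing a $k$-good triangle on an already-embedded edge, and planar graphs admit no such linear elimination ordering --- but identifying the obstacle is not overcoming it, and your proposal offers no candidate replacement.

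That said, the scaffolding around the gap is sound and consistent with the paper's own perspective in Sections~\ref{sec:application} and~\ref{sec:conclu}. The reduction to $r$-regular $r$-graphs is the standard reformulation of Seymour's exact conjecture, with two caveats: the equality should be read with $\lceil \chi'_f(G) \rceil$, since $\chi'_f$ need not be an integer while $\chi'$ is, and the planarity-preserving regularization to $\widehat{G}$ requires an argument (known, but not automatic as your one-sentence treatment suggests). Your even-case dual construction is exactly the proof of Theorem~\ref{thm:edge-coloring} with ``planar'' in place of ``$K_4$-minor-free'': a $2k$-regular plane multigraph is Eulerian, hence has bipartite dual; a perfect matching of the primal (which exists for $2k$-graphs) signs the dual; the $2k$-graph cut condition gives unbalanced-girth $2k$; and a homomorphism to $SPC(2k-1)$ would map each facial cycle to an unbalanced $2k$-cycle whose edges biject with $\{e_1,\dots,e_{2k-1},J\}$, yielding the edge-coloring. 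All of this is correct but conditional. In sum: an honest and accurate reduction of one open conjecture to another, not a proof; the missing ingredient is precisely Conjecture~\ref{conj:PlanarToPC}, and nothing in your proposal or in the paper's techniques closes it.
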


The restriction of this conjecture to the class of regular multigraphs is connected to homomorphisms to projective cubes, as we saw in Theorem~\ref{thm:edge-coloring} (see also~\cite{NRS13}).
Seymour proved Conjecture~\ref{conj:Seymour} when restricted to the class of $K_4$-free multigraphs in~\cite{S90}. Theorem~\ref{thm:edge-coloring} provides a simple proof for special cases of this result.

\subsection{Remarks on the algorithms}

We would now like to give two reasons why the algorithms given in this work are of importance.

Given a signed graph $(H, \Pi)$, the planar $(H, \Pi)$-coloring problem takes as an input a signed planar graph $(G, \Sigma)$ and outputs YES if there is a homomorphism of $(G, \Sigma)$ to $(H,\Pi)$. The algorithmic complexity of this question is of interest. Restricted to the class of graphs, and by the virtue of the four-color theorem, if $H$ contains a $K_4$, then the answer is always YES. On the other hand, using a cross-over gadget, it is shown in \cite[Theorem~2.2]{GJS76} that for $H$ being $K_3$, this is an NP-complete problem. The results of~\cite{N07} imply that for $H$ being the projective cube of dimension~$4$, there is a duality: a planar graph $G$ maps to $PC(4)$ if and only if $K_3$ does not map to $G$. This results in a polynomial-time algorithm for this case. Conjecture~\ref{conj:MappingPlanarToSPC(2k-1)} proposes a duality theorem for $SPC(2k-1)$, thus implying a polynomial-time algorithm. Duality thus far is the only way of producing polynomial-time examples of this general question. However, considering the difficulty level of Conjecture~\ref{conj:MappingPlanarToSPC(2k-1)}, even determining all signed graphs $(H, \Pi)$ for which a duality theorem with one signed graph (or a finite number of them) exists, seems out of reach. Our algorithms then do this work for the simplest non-trivial cases: given a signed bipartite graph $(H, \Pi)$ of unbalanced-girth $2k$, it decides in polynomial-time if $UC_{2k-2}$ can provide a duality theorem for determining if a signed bipartite $K_4$-minor-free graph $(G,\Sigma)$ maps to $(H,\Pi)$. If $(H, \Pi)$ passes the test, then a signed bipartite $K_4$-minor-free graph $(G, \Sigma)$ maps to $(H,\Pi)$ if and only if $UC_{2k-2}$ does not map to $(G, \Sigma)$.

The second reason to describe the algorithm is to implement it. An implementation using Sage is available at \hyperref{https://www.irif.fr/~reza}{}{}{\texttt{https://www.irif.fr/\~{}reza}}. This is of importance for progress on looking for optimal solutions mentioned in the previous section. As mentioned, some optimal solutions are already known. A natural approach then is to guess a few more cases which may then allow to observe a general pattern. Once a good guess is proposed, the program simply checks if the suggestion works, and then if a general pattern is observed one may employ our main theorem to devise a theoretical proof. The bounds of order $O(k^2)$ given in~\cite{BFN17} for the analogous problem were indeed built in this way.

\subsection{Future work and conclusion}

As a corollary of our work, we have shown that every signed bipartite $K_4$-minor-free graph of unbalanced-girth at least $2k$ admits a homomorphism to the signed projective cube $SPC(2k)$. Observing that this graph has $2^{2k-1}$ vertices, we believe that it is far from being an optimal such bound. We believe that a subgraph of order $O(k^2)$ would work, and we have shown in~\cite{BFN17} that this is the case for an analogous problem concerning graphs (with no signature) and odd-girth. It is shown in~\cite{NRS15} that every signed bipartite $K_4$-minor-free graph (of unbalanced-girth at least~$4$) admits a homomorphism to $(K_{3,3}, M)$ where $M$ is a perfect matching of $K_{3,3}$; in fact the result holds even if one of the negative edges is deleted. It can be easily checked that this is a subgraph of $SPC(3)$.

Smaller bounds in general would imply stronger applications to edge-coloring. We refer to~\cite{BFN17} for such analogous results in the case of (non-signed) graphs.

We would also like to point out that any signed bipartite graph of unbalanced-girth $2k$ which bounds $\SBSPG{2k}$ must be of order at least quadratic in $k$. In an ongoing work, He, Naserasr and Sun have built signed bipartite $K_4$-minor-free graphs of unbalanced-girth $2k$ for which any homomorphic image of unbalanced-girth $2k$ would need $\Omega(k^2)$ vertices.

Conjecture~\ref{conj:PlanarToPC} is believed to be true even for some larger classes of graphs such as the class of $K_5$-minor-free graphs. A notable subclass of $K_5$-minor-free graphs is the class of partial $3$-trees. While we plan to address this in forthcoming works, we point out that a signed bipartite partial $3$-tree of unbalanced-girth $2k$ is built in~\cite{NSS15} for which any homomorphic image that is of unbalanced-girth $2k$ has at least $2^{2k-1}$ vertices. The construction in that work is presented as a planar graph, but it can be easily verified that it is also a partial $3$-tree.

In~\cite{G05}, Guenin in fact proposed a stronger conjecture than Conjecture~\ref{conj:PlanarToPC}. He suggested that the condition of planarity not only can be replaced by being $K_5$-minor-free, but also by \emph{odd-$K_5$-minor-free}. Restricted this to the bipartite case, that would be to say:

\begin{conjecture}[\cite{G05}]\label{conj:BoundingBipartiteK_5free}
Let $(G,\Sigma)$ be a signed bipartite graph of unbalanced-girth at least $2k$ which does not have a $(K_{5}, E(K_5))$-minor. Then, $(G,\Sigma) \to SPC(2k-1)$.
\end{conjecture}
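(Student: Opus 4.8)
The plan is to reduce Conjecture~\ref{conj:BoundingBipartiteK_5free} to the planar case (Conjecture~\ref{conj:PlanarToPC}) via a structural decomposition together with a gluing lemma, so that the only genuinely new ingredient beyond the planar conjecture is an understanding of how homomorphisms to $SPC(2k-1)$ interact with clique-sums of small order. First I would invoke the appropriate structure theorem for signed graphs with no $(K_5, E(K_5))$-minor. In analogy with Wagner's theorem---every $K_5$-minor-free graph is a clique-sum, over cliques of order at most three, of planar graphs and copies of the Wagner graph $V_8$---the absence of an odd $K_5$-minor forces $(G,\Sigma)$ to decompose, along a tree of clique-sums over shared cliques of order at most three, into pieces (torsos) that are either planar with the inherited signature or of bounded size. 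The unbalanced-girth hypothesis is inherited by each piece, so that each piece lands in a class to which either Conjecture~\ref{conj:PlanarToPC} or a finite verification applies.

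Second, I would establish a \emph{gluing lemma}: if two signed bipartite graphs of unbalanced-girth at least $2k$ each admit a homomorphism to $SPC(2k-1)$ and share a clique of order at most three whose vertices lie pairwise on common unbalanced $2k$-cycles, then the two homomorphisms can be combined into one. The key point is that $SPC(2k-1)$ is highly symmetric: it is a Cayley graph on $\mathbb{Z}_2^{2k-1}$, hence vertex-transitive, and because the $2k$ elements $\{e_1,\ldots,e_{2k-1},J\}$ sum to zero they are interchangeable by automorphisms (combined with resigning), which is exactly the mechanism behind Theorem~\ref{thm:SPC(d)}. By Observation~\ref{obs:DistanceDetmByCycle} and Observation~\ref{obs:HomDistanceDetmByCycle}, the image of the shared clique under either homomorphism is determined, up to this symmetry and up to resigning, by the algebraic distances on the clique. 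Applying a suitable automorphism of $SPC(2k-1)$ and resigning one side exactly as in the proof of Theorem~\ref{thm:MinimalBound}, I can align the two images on the shared clique and take the union of the two homomorphisms. Iterating along the clique-sum tree then assembles the piecewise homomorphisms into a global one.

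Third, the base pieces must be handled. The bounded pieces can be dealt with by a finite case analysis: a copy of $V_8$ has only eight vertices, so if $2k>8$ it contains no unbalanced cycle at all and is balanced (hence maps trivially once its bipartition is respected), while for $2k\le 8$ only finitely many signed graphs of unbalanced-girth at least $2k$ arise on such pieces, each of which can be checked directly against $SPC(2k-1)$ using the algebraic-distance computation of Section~\ref{sec:algo} (or Theorem~\ref{thm:SPC all k-good}). The planar pieces, however, are precisely the statement of Conjecture~\ref{conj:PlanarToPC}. Fourth, I would assemble: combine the planar and bounded base homomorphisms through the gluing lemma across the decomposition tree to obtain $(G,\Sigma)\to SPC(2k-1)$.

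The main obstacle is unavoidable and lies in the third step: the planar base case \emph{is} Conjecture~\ref{conj:PlanarToPC}, which is open and, already for $k=2$, strictly stronger than the four-color theorem. Thus the realistic outcome of this plan is a \emph{conditional} result---Conjecture~\ref{conj:BoundingBipartiteK_5free} would follow from Conjecture~\ref{conj:PlanarToPC}---whose genuinely new mathematical content is the structural reduction and the gluing lemma. Within that content, the hardest technical point is verifying that the triangle-symmetry of $SPC(2k-1)$ is transitive enough on weighted triangles (triples of pairwise algebraic distances realized on a common unbalanced $2k$-cycle) to align the two homomorphisms on a shared triangle after clique-summing, since a mismatch on even a single shared triangle would obstruct taking the union and break the inductive assembly.
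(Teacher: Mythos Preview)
The statement you are attempting to prove is a \emph{conjecture} in the paper; the paper offers no proof and presents it as an open problem attributed to Guenin. So there is no ``paper's own proof'' to compare against, and you correctly identify in your final paragraph that your plan is at best a conditional reduction to Conjecture~\ref{conj:PlanarToPC}, which is itself open (and already for $k=2$ at least as hard as the four-color theorem).

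Beyond that, the reduction itself has genuine gaps. First, the structure theorem you invoke does not exist in the form you describe: Wagner's decomposition is for graphs with no $K_5$-minor, not for signed graphs with no $(K_5,E(K_5))$-minor, and the latter class is strictly larger and not known to decompose into planar pieces plus copies of $V_8$ glued along small cliques. Second, even if such a decomposition were available, the torsos of a clique-sum carry extra clique edges that are not in $G$; since $G$ is bipartite these added edges create odd cycles in the torso, so the torso is no longer a signed \emph{bipartite} graph and Conjecture~\ref{conj:PlanarToPC} cannot be applied to it directly. Relatedly, there is no reason the unbalanced-girth condition passes to torsos once these virtual edges are added. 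Third, your gluing lemma hinges on the automorphism group of $SPC(2k-1)$ (together with resigning) acting transitively on triples of vertices with prescribed pairwise algebraic distances; this is a much stronger homogeneity than vertex- or edge-transitivity, and you give no argument for it. The all $k$-good property of Theorem~\ref{thm:SPC all k-good} guarantees that every $k$-good weighted triangle is \emph{realized} in the complete signed distance graph of $SPC(2k-1)$, but not that any two realizations are equivalent under automorphisms, which is what your gluing step actually needs.
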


This motivates further studies of the simpler class of $(K_{4}, E(K_4))$-minor-free signed graphs (or odd-$K_4$-minor-free graphs). In particular, we would like to ask if in our main theorem (Theorem~\ref{thm:MinimalBound}), the condition of ``no $K_4$-minor'' for the underlying graph can be replaced by ``no $(K_{4}, E(K_4))$-minor'' for the signed graph itself.

\bigskip

{\bf Acknowledgments}\\ 
We thank the referees for carefully reading the original submission and helping to improve the presentation.

\end{document}